\newtheorem {theorem}{Theorem}
\newtheorem {lemma}[theorem]{Lemma}
\newtheorem {corollary}[theorem]{Corollary}
\theoremstyle{definition}
\theoremstyle{theorem}
\def\ba{\begin{array}}
\def\ea{\end{array}}
\def\bea{\begin{eqnarray} \label}
\def\eea{\end{eqnarray}}
\def\be{\begin{equation} \label}
\def\ee{\end{equation}}
\def\bit{\begin{itemize}}
\def\eit{\end{itemize}}
\def\ben{\begin{enumerate}}
\def\een{\end{enumerate}}
\def\NN{\mathbb{N}}
\def\RR{\mathbb{R}}
\def\1{\mathds{1}}
\begin{document}

\title{\bfseries Spatial STIT Tessellations -- Distributional Results for I-Segments}

\author{Christoph Th\"ale\footnotemark[1]\,, Viola Wei\ss\footnotemark[2]\, and Werner Nagel\footnotemark[3]}

\date{}
\renewcommand{\thefootnote}{\fnsymbol{footnote}}
\footnotetext[1]{Universit\"at Osnabr\"uck, Institut f\"ur Mathematik, Albrechtstr. 28a, D-49076 Osnabr\"uck, Germany. Email: christoph.thaele@uni-osnabrueck.de}

\footnotetext[2]{Fachhochschule Jena, Fachbereich Grundlagenwissenschaften, Carl-Zeiss-Promenade 2, D-07745 Jena, Germany. Email: viola.weiss@fh-jena.de}

\footnotetext[3]{Friedrich-Schiller-Universit\"at Jena, Institut f\"ur Stochastik, Enrst-Abbe-Platz 2, D-07743 Jena, Germany. Email: werner.nagel@uni-jena.de}

\maketitle

\begin{abstract}
Three-dimensional random tessellations that are stable under iteration (STIT tessellations) are considered. They arise as a result of subsequent cell division, which implies that their cells are not face-to-face. The edges of the cell-dividing polygons are the so-called I-segments of the tessellation. The main result is an explicit formula for the distribution of the number of vertices in the relative interior of the typical I-segment. On the way of its proof other distributional identities for the typical as well as for the length-weighted typical I-segment are obtained. They provide new insight into the spatio-temporal construction process.\\ \\
\noindent
{\bf Keywords}. {Cell division process; iteration/nesting; marked point process; random tessellation; stability under iteration; stochastic geometry}\\
{\bf MSC}. Primary 60D05; Secondary 60G55, 60E05.
\end{abstract}

\section{Introduction}

In recent years, random tessellation theory has been an active field of research. Whereas in the past mainly mean values and their relations were considered, current research focuses on second-order parameters, limit theorems and distributional results, see \cite{H,HM,HS3} to mention only a few. Besides the classical Poisson hyperplane and Poisson-Voronoi tessellations, random tessellations constructed by subsequent cell division have attracted particular interest in recent times in stochastic geometry and spatial statistics (see \cite{cc,cowan1,rt} and especially \cite{cowan3} and the references cited therein). Among these models, the so-called STIT tessellations (which are {\bf st}able under {\bf it}eration, see below) introduced in \cite{mnwconstr,nwstit} are of particular interest, because of the number of analytically available results \cite{lr,mnwmathnachr,nw3d}, \cite{ST2}--\cite{ST4} and \cite{thaele}--\cite{wc}, see Figure \ref{figstit} for illustrations. The model shows the potential to become a new reference model for crack or fissure structures.

\begin{figure}
\begin{center}
\includegraphics[width=0.49\columnwidth]{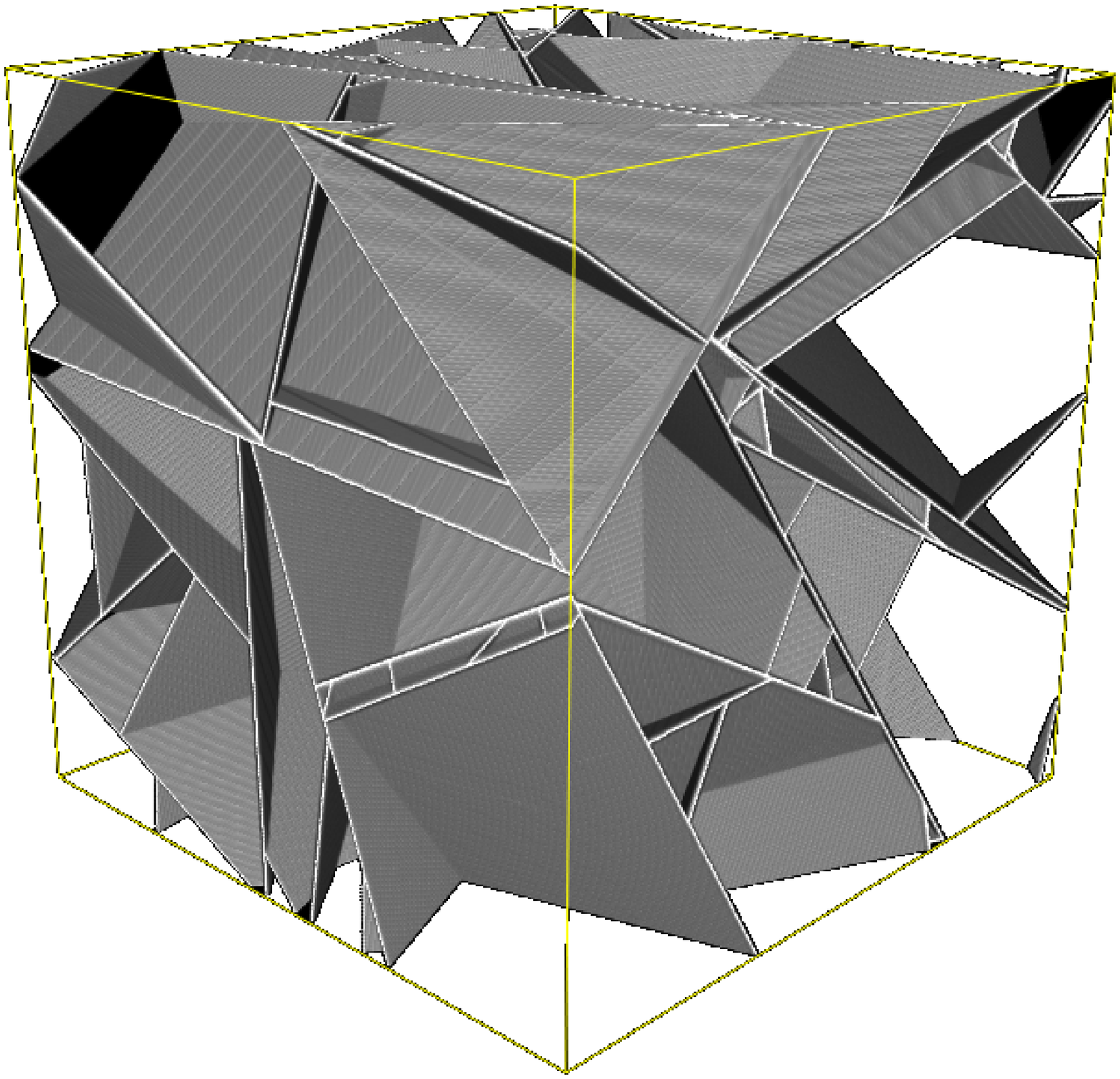}
\includegraphics[width=0.49\columnwidth]{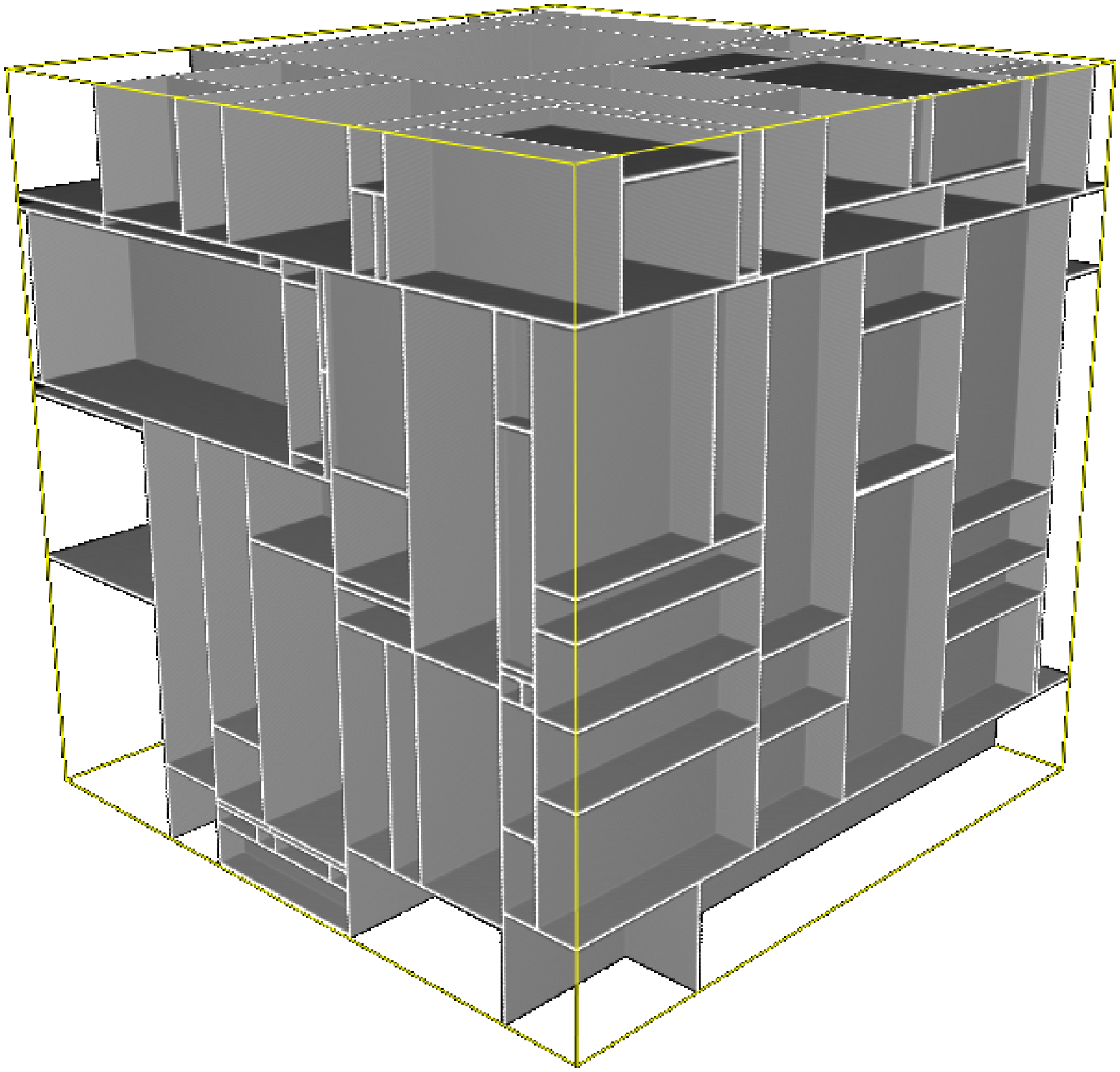}
\caption{Realisations of an isotropic STIT tessellation (left) and of a STIT tessellation whose directional distribution $\cal R$ is concentrated with equal weight on three orthogonal directions (right).}
\label{figstit}
\end{center}
\end{figure}
After a detailed analysis of planar STIT tessellations in \cite{mnwmathnachr,ST2,thaele}, in the present paper  the $3$-dimensional case is considered. We study so-called \textit{I-segments} in homogeneous three-dimensional STIT tessellations, which form the one-dimensional building blocks of a STIT tessellation. They appear in the course of the sequential cell splitting procedure when cells are divided by new planes. In fact, all sides ($1$-faces) of a dividing two-dimensional polygon -- which will be referred to as an \textit{I-polygon} -- are called I-segments. In principle the distribution of the number of vertices in the relative interior of the \textit{typical} I-segment is in the focus of this paper. Regarding the spatio-temporal construction, it becomes evident that this number depends on the birth time, the direction and on the length of the I-segment. But it also depends on the birth time of that I-polygon, in whose interior the I-segment under consideration arises at a later time. This polygon is called the \textit{carrying I-polygon} of the I-segment. At its birth time, this I-polygon is a facet of two adjacent mosaic cells and during the cell division process these cells can undergo further subdivision and in this way new vertices and edges within the carrying I-polygon can appear, see Figure \ref{figpolygon}.

For this reason we consider the edges in the carrying I-polygon (dark grey) and observe that an I-segment can have intersections with already existing edges (light coloured lines in the dark grey polygon)  at the moment of its birth, see Figure \ref{figpolygon}. This is in sharp contrast to the planar case \cite{mnwmathnachr,thaele} and causes that their analysis is considerably more involved. Further vertices can arise after the birth of the typical I-segment as an effect of the ongoing cell division procedure. For deriving an explicit formula for the probability that a fixed number of vertices is located in the relative interior of the typical I-segment, we need to study at first the marked process of I-segments with the following marks: length, direction and birth time of the I-segment and birth time of the carrying I-polygon. We will describe the joint and all marginal distributions of the marks associated to the typical and for the length-weighted typical I-segment of a homogeneous spatial STIT tessellation. Moreover, by noting that such a spatial tessellation has exactly two different types of vertices (see Figure \ref{figknoten}) we can refine our result and obtain the joint distribution of the numbers of vertices of both types in the relative interior of the typical I-segment.

Note that it would also be interesting to calculate the distribution of the number of vertices within the typical $2$-dimensional building block, i.e. the typical I-polygon. However, this quantity is currently not accessible, because to this end the area distribution of a typical Poisson polygon is needed. To determine this  distribution is a long-standing open problem in stochastic geometry. 

\begin{figure}
\begin{center}
\includegraphics[width=0.49\columnwidth]{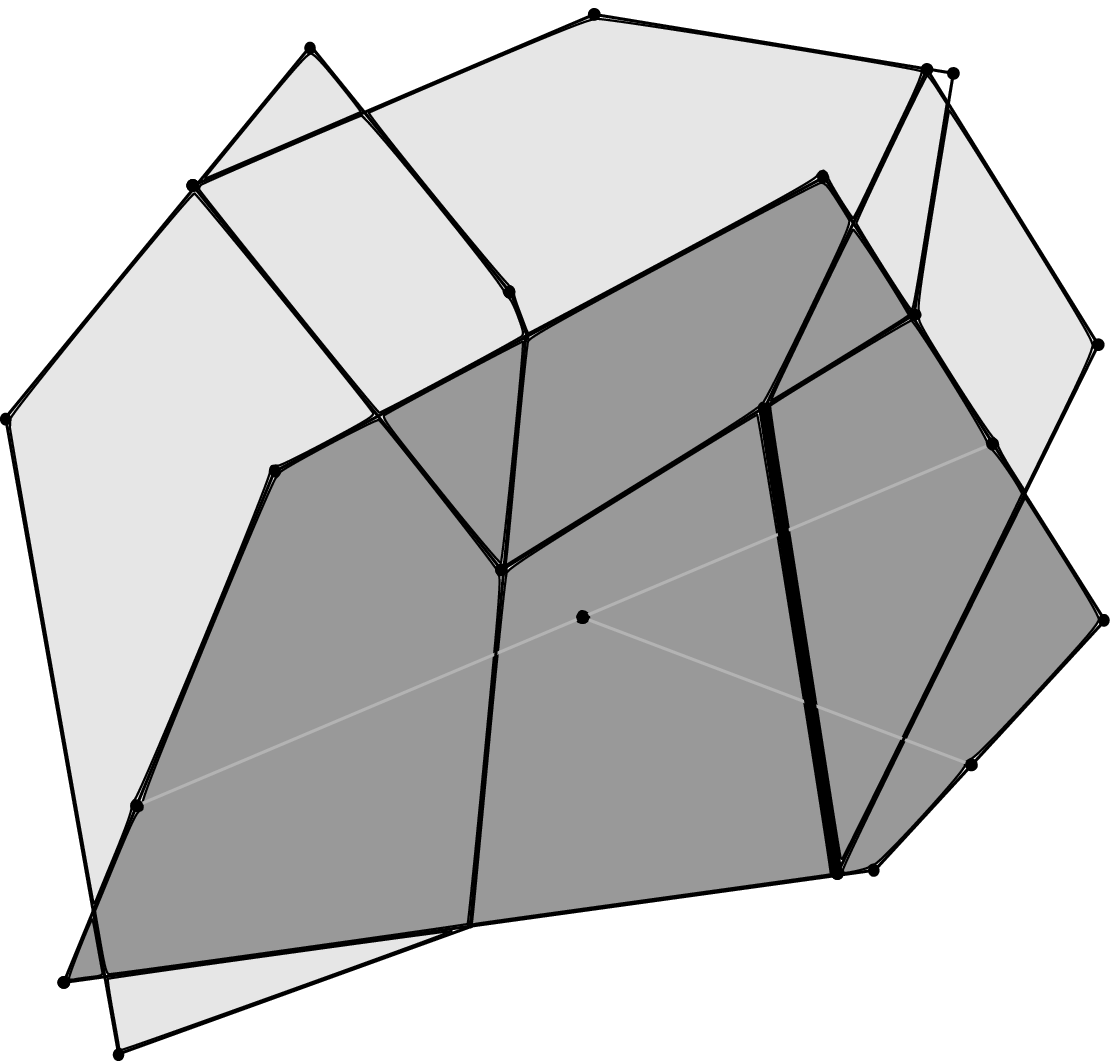}
\includegraphics[width=0.49\columnwidth]{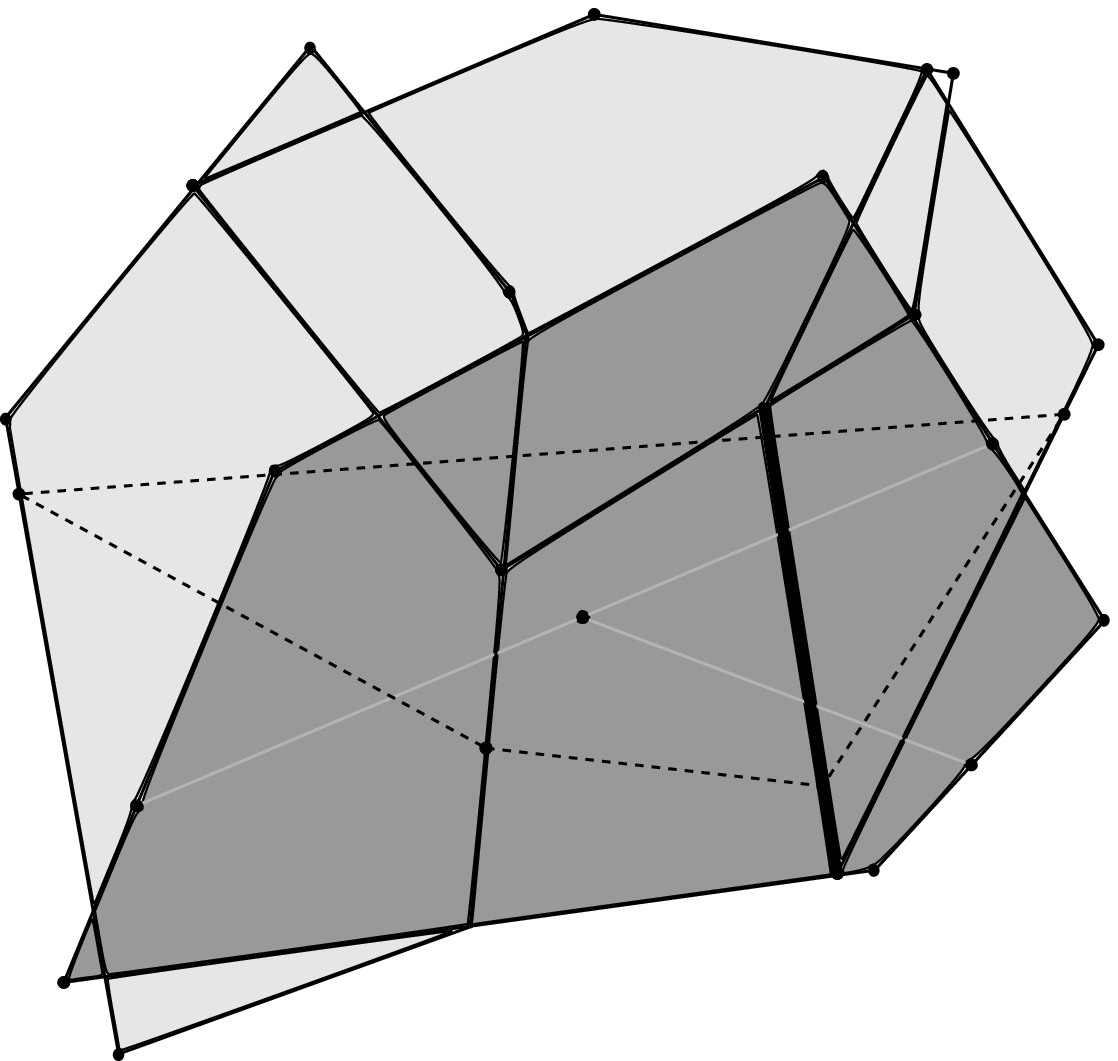}
\caption{An I-segment (bold) together with its carrying I-polygon (dark grey) at the time of its birth (left) and after further subdivision (right).}
\label{figpolygon}
\end{center}
\end{figure}

As already mentioned, related problems in the planar case have been studied in \cite{mnwmathnachr,thaele}. The results obtained there have led to a deeper understanding of planar STIT tessellations and were used to obtain new results about their structure. In a companion paper \cite{tw2} we also use the results of the present work to explore in more detail the combinatorial structure of spatial STIT tessellations, a study initiated in \cite{tw,wc}.

The paper is organized as follows: In Section \ref{secgeneral} we introduce some notation, recall the basic construction of STIT tessellations and rephrase some of their most important properties, which are needed for our later arguments. The conceptual framework as well as our main results are contained in Section \ref{secresults}. The proofs are the content of Section \ref{secproofs}.

\section{STIT tessellations in $\RR^d$}\label{secgeneral}

In this section we explain our basic notation, the construction and the main properties of STIT tessellations that are needed for our arguments below. Although our results are dealing with the three-dimensional case, we focus in this section on general space dimensions, because in our proofs we apply some of the properties also to lower-dimensional tessellations.

\subsection{Notation}\label{secnotation}

A \textit{tessellation} of $\RR^d$ with $d\geq 1$ can be described in two ways: as a locally finite collection of non-overlapping and space-filling compact convex polytopes (called \textit{cells} in the sequel), as well as a closed subset of $\RR^d$, which is formed by the union of all cell boundaries. We switch between both perspectives arbitrarily.

In this paper we will deal with \textit{random} tessellations of $\RR^d$, which can be regarded as random variables taking values in the measurable space of tessellations of $\RR^d$, see \cite{s/w,stkm} for a detailed definition and in particular for measurability issues. Our attention will be restricted to \textit{homogeneous} (spatially stationary) random tessellations, i.e. random tessellation whose distribution is invariant under spatial translations. 

We denote by ${\cal H}$ the set of all hyperplanes in ${\mathbb R}^d$. A hyperplane $h\in {\cal H}$ will be parametrized by its normal direction $u\in {\cal S}^{d-1}_+$ (the upper unit half-sphere in $\RR^d$) and its signed distance $p\in \RR$ to the origin, where the distance is defined as positive iff the projection of the origin $o$ onto $h$ is located in the upper half-space. Such a hyperplane is denoted by $h=h(p,u)\in {\cal H}$. For $B\subset {\mathbb R}^d$ define 
\begin{equation}%\label{SetOfHittingPlanes}
\nonumber [B]=\{ (p,u) \in \RR \times {\cal S}^{d-1}_+:\, h(p,u) \cap B\not= \emptyset \}
\end{equation} 
as the set of parameter values $(p,u)$ of hyperplanes $h(p,u)$ hitting the set $B$. Consider a measure $\Lambda$ on $\RR \times {\cal S}^{d-1}_+$ (equipped with the Borel product $\sigma$-field), which is the image under the described parametrization of a (non-zero) locally finite, translation invariant measure on ${\cal H}$. Invariance under translations implies that $\Lambda$ factorizes, i.e. there is a constant $\lambda>0$ and a probability measure ${\cal R}$ on  ${\cal S}^{d-1}_+$ with
\begin{equation}\label{Factorization}
\Lambda = \lambda \, \mu \otimes {\cal R},
\end{equation}
where $\mu$ denotes the Lebesgue measure on $\RR$. If $\cal R$ is the uniform distribution on ${\cal S}_+^{d-1}$ and $\lambda = 1$, then $\Lambda$ corresponds to the isometry-invariant hyperplane measure $\Lambda_{\rm iso}$, cf. \cite{s/w}. From now on we assume $\lambda = 1$ in the factorization (\ref{Factorization}). Moreover, to avoid degenerated cases we assume that $\cal R$ is not concentrated on a great half-subsphere of ${\cal S}_+^{d-1}$, i.e. we require $\text{span}(\text{support}({\cal R}))={\Bbb R}^d$. 

\subsection{Construction}\label{secconstructionstit}

A formal and detailed description of  STIT tessellations in bounded windows is given in \cite{nwstit}. Here we explain the construction in an intuitive way only. Let $\Lambda$ be a hyperplane measure as  above and $W\subset\RR^d$ a bounded convex polytope. We assign to $W$ a random lifetime and on expiry of this lifetime, we choose a random hyperplane, which splits $W$ into two polyhedral sub-cells $W^+$ and $W^-$. The construction continues now independently and recursively in both of the sub-cells $W^+$ and $W^-$, which is to say that $W^+$ and $W^-$ are provided with independent random lifetimes and that they are divided by random hyperplanes when they die. Note that the hyperplanes are always chopped-off by the boundary of their respective mother cells. This repeated cell division is continued until a fixed deterministic time threshold $t>0$ is reached. The random tessellation constructed until time $t$ within $W$ is denoted by $Y(t,W)$.

In order to ensure the temporal Markov property of the described cell splitting process, we assume that the lifetimes of the cells are conditionally (given the cells at a certain time) independent and exponentially distributed. Moreover, we assume that the parameter of this exponential lifetime distribution of a cell $c$ is given by $\Lambda([c])$ (and thus the parameters of the exponential distributions of different cells are not independent). In the special case $\Lambda=\Lambda_{\rm iso}$ we have that $\Lambda_{\rm iso}([c])$ is proportional to the integral-geometric mean width of $c$. Note that this choice  ensures that smaller cells live stochastically longer. Furthermore, we will assume that the hyperplane splitting a cell $c$ is chosen according to the law $\Lambda([c])^{-1}\Lambda(\cdot\cap [c])$. 

\subsection{Important properties}

We summarize here those properties of STIT tessellations that are needed in our arguments below. For further background on STIT tessellations we refer the reader to \cite{mnwconstr,nwstit} and to \cite{s/w,stkm} for a general introduction to stochastic geometry.

If we insert independent copies $Y_c(s)$ of the tessellation $Y(s)$ into the cells $c$ of $Y(t)$ (generating $Y_c(s)\cap c$), the resulting {\it iterated} or {\it nested tessellation} is denoted by $$Y(t) \boxplus Y(s)=Y(t)\cup\bigcup_{\text{$c$ a cell of $Y(t)$}}(Y_c(s)\cap c).$$ Considered as a random closed set, $Y(t,W)$ satisfies the following

\begin{description}
 \item \textbf{[Spatial consistency]} The random tessellation $Y(t,W)$ is spatially consistent in that for any convex $V\subset W$ with $W$ as in Subsection \ref{secnotation} we have $Y(t,W)\cap V\overset{D}{=}Y(t,V)$ (where $\overset{D}{=}$ stands for equality in distribution). Thus, Kolmogorov's extension theorem ensures that there exists a homogeneous random tessellation $Y(t)$ in the whole $\RR^d$ satisfying $Y(t)\cap W\overset{D}{=}Y(t,W)$. For the particular choice $\Lambda=\Lambda_{\rm iso}$, $Y(t)$ is also isotropic, i.e. its distribution is rotation invariant.
\end{description} 

In \cite{mnwconstr} a global construction of $Y(t)$ is provided, where the spatio-temporal random \textit{process} $(Y(t))_{t>0}$ is defined. Its important properties are summarized here:
\begin{description} 
 \item \textbf{[Scaling]}  The distributions of the rescaled tessellations are identical, i.e. $t\, Y(t)\overset{D}{=}s\,  Y(s)$ for all $s,t>0$.
 \item \textbf{[Iteration stability]} We have
 \begin{equation}\label{BOXPLUS}
   Y(t)\stackrel{D}{=}Y(s)\boxplus Y(t-s),\qquad 0<s<t,
 \end{equation}
 where $Y(s)$ and $Y(t-s)$ are independent. Consequently, because of \textbf{[Scaling]}, $$Y(t) \overset{D}{=} n Y(nt) \overset{D}{=} n(\underbrace{Y(t)\boxplus\ldots\boxplus Y(t)}_{n\ \text{times}})$$ for $t > 0$ and $n \in {\Bbb N}.$ The latter relation is usually referred to as {\it \underline{st}ability under \underline{it}erations}. For this reason, the random tessellations $Y(t)$ are called {\it STIT} tessellations.
  \item \textbf{[Poisson typical cell]} The distribution of the interior of the typical cell of $Y(t)$ coincides with that of the interior of the typical cell of a homogeneous Poisson hyperplane tessellation with intensity measure $t\Lambda$ (cf. \cite{s/w,stkm}).
  \item \textbf{[Interpretation of $t$ and ${\cal R}$]} The surface density $S_V$ of $Y(t)$, this is the mean total $(d-1)$-volume of cell boundaries of $Y(t)$ per unit $d$-volume, is equal to the construction time $t>0$, i.e. $S_V=t$. The probability measure ${\cal R}$, see (\ref{Factorization}), is the distribution of the normal direction in the typical boundary point of $Y(t)$, called the  \textit{directional distribution} of the tessellation (this is the surface-area-weighted directional distribution of the cell boundaries).
 \item \textbf{[Linear sections]} The intersection of $Y(t)$ with a line $L$ induces a homogeneous Poisson point process on $L$ with intensity $\Lambda([e(L)])t$, where $e(L)$ is a segment of unit length on $L$.
 \item \textbf{[Temporal Markov property]} The random process $(Y(t))_{t>0}$ with values in the space of  tessellations satisfies the Markov property in time. 
 
\end{description}

%%%%%%%%%%%%%%%%%%%%%%%%%%%%%%%%%%%%%%%%%%%%%%%%%%%%%%%%%%

\section{Framework and results}\label{secresults}

In this section we present our main results in Theorem \ref{thm}, Theorem \ref{thm3}, Theorem \ref{thm2} and their corollaries. Before we can state them, we have to introduce further notation and the framework of our work.

\subsection{The marked process of I-segments in spatial STIT tessellations}

The notion of I-segments is due to R. Miles and characterizes one of the types of line segments associated with a random planar tessellation that is not face-to-face, cf. \cite{mm} and the references cited therein. An \textit{I-segment} can be defined as a maximal union of collinear and connected line segments that appear in the edge-skeleton of a tessellation. Obviously, this definition does not depend on the dimension of the ambient space and will henceforth also be used for the spatial case, $d=3$. Generalizing Miles' concept, an \textit{I-polygon} is a maximal union of coplanar and convex planar polygons (subsets of a plane in $\RR^3$). Here, `maximal' is understood in the sense that there is no extension to a larger collinear (coplanar) and convex set in the tessellation (considered as the closed set of cell boundaries).

In terms of the spatio-temporal construction of STIT tessellations $Y(t)$, the I-segments and I-polygons in spatial STIT tessellations can be characterized as follows. Any I-polygon of $Y(t)$ is a cell-dividing planar polygon introduced until time $t$, and any I-segment is a side ($1$-face) of an I-polygon. When a cell $c$ is divided by a plane $h\in[c]$, exactly one I-polygon $c\cap h$ is born, whereas at least three I-segments are born simultaneously. Any of these I-segments is the intersection of the I-polygon $c\cap h$ with a facet ($2$-face) of the cell $c$. This facet itself is embedded in a (possibly larger) previously born I-polygon, which is called the \textit{carrying I-polygon} of the I-segment.

For our purposes it will be appropriate to describe the joint distribution of the following marks associated to an I-segment in $Y(t)$:
\begin{equation}\label{defmarks}
(\ell, \varphi, \beta, \beta_{\rm carr})\in(0,\infty)\times{\cal S}_+^2\times(0,t)\times(0,t).
\end{equation}
Here $\ell,\varphi$ and $\beta$ are the length, direction and the birth time of the I-segment, respectively, and $\beta_{\rm carr}$ is the birth time of the carrying I-polygon. Note that the direction of a segment is the unique unit vector in ${\cal S}_+^2$ parallel to the segment.

Consider for fixed time $t>0$ and measure $\Lambda$ the homogeneous STIT tessellation $Y(t)$. Then the process of the \textit{marked} I-segments of $Y(t)$ with marks as in (\ref{defmarks}) is a homogeneous marked segment process. Thus, Palm calculus for marked point processes can be applied and allows to define the distribution of the \textit{typical I-segment} and its \textit{mark distribution}, see \cite{s/w,stkm} and also Section \ref{secproofs} below. In intuitive terms, the typical I-segment and its mark distribution can be regarded as a randomly (equally likely) chosen I-segment of $Y(t,W)$ together with its random marks when $W$ is a `large' observation window.

In the following we use the indicator function notation ${\bf 1}\{\ldots\}$, which is $1$ if the statement in brackets is fulfilled and $0$ otherwise. Let us also define the two constants
\begin{eqnarray}
\zeta_2 &:=& \int\limits\limits_{{\cal S}_+^2}\int\limits\limits_{{\cal S}_+^2}[u,v]\;{\cal R}(du){\cal R}(dv),\label{defzeta2}\\
\nonumber \zeta_3 &:=& \int\limits_{{\cal S}_+^2}\int\limits_{{\cal S}_+^2}\int\limits_{{\cal S}_+^2}[u,v,w]\;{\cal R}(du){\cal R}(dv){\cal R}(dw), 
\end{eqnarray}
where $[u,v]$ is the area of the parallelogram spanned by $u,v\in{\cal S}_+^2$ and where $[u,v,w]$ stands for the volume of the parallelepiped spanned by $u,v,w\in{\cal S}_+^2$ (interpreted as unit vectors in ${\Bbb R}^3$ having one endpoint at the origin). Note that in the isotropic case these constants are given by $\zeta_2={\pi\over 4}$ and $\zeta_3={\pi\over 8}$. We further introduce the following probability distributions on ${\cal S}_+^2$:
\begin{eqnarray}
\widetilde{{\cal R}}(U) &:=& {1\over\zeta_2}\int\limits_{{\cal S}_+^2}\int\limits_{{\cal S}_+^2}{\bf 1}\{u^\perp\cap v^\perp\cap{\cal S}_+^2\in U\}[u,v]\;{\cal R}(du){\cal R}(dv),\label{deftildeR}\\
{\cal R}_{\rm typ}(U) &:=& {\zeta_2\over\zeta_3}\int\limits_{{\cal S}_+^2}{\bf 1}\{u\in U\}\Lambda([u])\;\widetilde{{\cal R}}(du),\label{defrtyp}
\end{eqnarray}
where $U\subset{\cal S}_+^2$ is a Borel set and where $\Lambda([u])$ stands for the $\Lambda$-measure of the set of all planes hitting the unit line segment connecting $u\in{\cal S}_+^2\subset{\Bbb R}^3$ with the origin $o$, and $u^\perp$ denotes the orthogonal complement of $u\in {\cal S}_+^2$. For a Poisson plane tessellation with intensity measure $\Lambda$ the distributions $\widetilde{{\cal R}}$ and ${\cal R}_{\rm typ}$ are the length-weighted distribution of the direction of edges and the directional distribution of the typical edge, respectively, see \cite{HS3}.
In the particular isotropic case we have $\Lambda([u])={1\over 2}$ for any $u\in{\cal S}_+^2$ and thus, $\widetilde{\cal R}$ and ${\cal R}_{\rm typ}$ are the uniform distribution on ${\cal S}_+^2$.

\subsection{Statement of results}

We are now prepared to present the main results of this paper. We start with a description of the distribution of marks, see (\ref{defmarks}), associated with the typical I-segment:
\begin{theorem}\label{thm} Let $Y(t)$ be a homogeneous random STIT tessellation in ${\Bbb R}^3$ with measure $\Lambda$ as in (\ref{Factorization}).
\begin{description}
 \item[(i)] The distribution of the direction of the typical I-segment of $Y(t)$ equals ${\cal R}_{\rm typ}$ as defined in (\ref{defrtyp}).
 \item[(ii)] The joint birth time density of $(\beta, \beta_{\rm carr})$ of the typical I-segment of $Y(t)$ equals
$$p_{\beta,\beta_{\rm carr}}(s,r) = {3s\over t^3}{\bf 1}\{0<r<s<t\}.$$
 \item[(iii)] The conditional length density of the typical I-segment of the STIT tessellation $Y(t)$, given $(\varphi,\beta,\beta_{\rm carr})=(u,s,r)\in{\cal S}_+^2\times(0,t)^2$ with $0<r<s<t$, is
$$p_{\ell |{\varphi=u,\beta=s ,\beta_{\rm carr}=r }}(x) = \Lambda([u])s\ {\rm e}^{-\Lambda([u])s x}{\bf 1}\{x>0\} .$$
\end{description}
\end{theorem}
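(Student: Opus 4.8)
The plan is to use the iteration-stability relation $Y(t)\overset{D}{=}Y(s)\boxplus Y(t-s)$ together with the Poisson character of the typical cell and the Poisson description of linear sections, in order to realise the typical I-segment explicitly in a two-stage construction. The starting point is the standard Palm-calculus device: one first sets up the stationary marked segment process of I-segments of $Y(t,W)$ in a large window $W$, computes its intensity and its mark distribution by decomposing the cell-division history according to the birth time $\beta$ of the I-segment and the birth time $\beta_{\rm carr}$ of its carrying I-polygon, and then passes to the limit $W\uparrow\RR^d$. Concretely I would count, for $0<r<s<t$, those I-polygons born at time $r$ (these form a stationary process of Poisson-type polygons, by \textbf{[Poisson typical cell]} applied to the $(d-1)$-dimensional induced tessellation), and within each such carrying I-polygon the I-segments created at time $s$ when an adjacent cell is cut; the new I-segments are exactly the sides of the new I-polygon that lie on the carrying polygon's plane, and \emph{their length} is governed by the Poisson line process induced on that plane by the cutting hyperplanes active between times $r$ and $s$. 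This is where the factor $\Lambda([u])$ enters: the chord hitting a given plane, in direction $u$, is cut by the ambient hyperplane process at rate $\Lambda([u])$ per unit time, and the I-segment is the chord between two consecutive such cuts, hence its length is exponential with parameter $\Lambda([u])\cdot(\text{elapsed time})$.

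For part (ii), the birth-time density, I would argue as follows. Conditionally on the carrying I-polygon being born at time $r$, new I-segments on it are born at the instants when the two adjacent cells get subdivided; the rate at which such a subdivision produces a new edge on a fixed plane, per unit area of that plane, is proportional to $s$ — more precisely the intensity of I-segments born in $(s,s+ds)$ on a carrying polygon born at time $r$ is $\propto s\,ds$, reflecting that the induced line process on the plane has intensity growing linearly in time (this is again \textbf{[Linear sections]} / \textbf{[Poisson typical cell]} in dimension $d-1=2$). Meanwhile the intensity of carrying I-polygons born in $(r,r+dr)$ is constant in $r$ (it is a cut rate, hence proportional to $dr$). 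Combining these two, the unnormalised joint intensity of $(\beta,\beta_{\rm carr})=(s,r)$ is proportional to $s\cdot\mathbf 1\{0<r<s<t\}$; normalising over the triangle $\{0<r<s<t\}$, whose weighted mass is $\int_0^t\int_0^s s\,dr\,ds=\int_0^t s^2\,ds=t^3/3$, yields the density $\tfrac{3s}{t^3}\mathbf 1\{0<r<s<t\}$. Part (iii) is then the exponential-chord computation just described, performed conditionally on $\varphi=u$ and on $\beta=s$ (the elapsed time from birth of the I-segment back through the cutting process that fixed its endpoints is exactly $s$, since cuts on the carrying polygon that predate time $s$ are precisely those that could have truncated the segment): the length is $\operatorname{Exp}(\Lambda([u])s)$, and crucially this does not depend on $r$, which is consistent with the stated conditional density. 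Finally part (i) follows by integrating out $\ell$ and $(\beta,\beta_{\rm carr})$ from the joint mark distribution: the direction-dependence sits in the pair (weight for a segment of direction $u$ on a plane, which brings in the factor $[u,v]$ averaged against $\cal R$ twice, i.e. $\widetilde{\cal R}$) times (the rate $\Lambda([u])$ at which it is further cut, which reweights $\widetilde{\cal R}$ into ${\cal R}_{\rm typ}$ as in (\ref{deftildeR})--(\ref{defrtyp})); one has to be careful to separate the length-biasing — the typical I-segment is \emph{not} length-weighted — so the factor coming from the expected number of pieces a long chord is cut into must be tracked correctly, and it is exactly this bookkeeping that turns $\widetilde{\cal R}$ into ${\cal R}_{\rm typ}$.

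The main obstacle I anticipate is the rigorous Palm-theoretic bookkeeping in the two-stage decomposition: one must justify that conditioning on the carrying I-polygon and then on the cutting process between times $r$ and $s$ really does reproduce the Palm distribution of the typical I-segment of $Y(t)$, including getting the combinatorial weights right (how many I-segments a single cell division contributes, and how the "pick a uniformly random I-segment" normalisation interacts with length-biasing on the carrying plane). The identity \textbf{[Iteration stability]} is the tool that makes the "cuts between $r$ and $s$" picture legitimate — it says the tessellation built up to time $s$ inside a cell born at time $r$ is an independent copy of $Y(s-r)$ — but translating that into a clean statement about the induced process on the fixed plane of the carrying polygon, and checking the intensities match (\ref{defzeta2})--(\ref{defrtyp}), is the delicate part; the rest is the by-now-standard exponential-chord / Poisson-line-section calculation.
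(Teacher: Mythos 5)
Your route is genuinely different from the paper's and it does lead to the correct formulas, but as written it rests on two rate claims that are not proved, one of which is false in the form you state it. You assert that ``the intensity of carrying I-polygons born in $(r,r+dr)$ is constant in $r$ (it is a cut rate)''. The \emph{number} of I-polygons born per unit volume and unit time at time $r$ is not constant: the cell intensity of $Y(r)$ grows like $r^3$ while each cell dies at rate $\Lambda([c])\sim r^{-1}$, so the count rate grows like $r^2$. What is constant in $r$ is the \emph{area} of new I-polygon material per unit volume and unit time (this is $dS_V/dt=1$), and only with this area-weighted reading does your product argument produce a count density for $(\beta,\beta_{\rm carr})$. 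Likewise your ``$\propto s$'' birth rate of I-segments per unit area of a carrying polygon born at $r$ is correct but not for the stated reason: the line process \emph{interior} to the carrying polygon at time $s^-$ (the traces of the two independent copies of $Y(s-r)$ adjacent to it) has intensity proportional to $s-r$, not $s$. The birth rate of a new chord on a facet $F$ is $\Lambda([F])$, and summing $\Lambda([F])$ over the facets on both sides gives $2\Lambda([P])$ plus the $\Lambda$-weighted length of the interior edges; the first term is proportional to $r$ (perimeter per unit area of a polygon born at $r$), the second to $s-r$, and the fact that they combine to something proportional to $s$ and independent of $r$ is essentially the assertion of part (ii) -- it must be verified, not assumed. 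The same caveat applies to part (iii): that the facet of the dying cell truncates the new chord to an $\operatorname{Exp}(\Lambda([u])s)$ length independent of $r$ is the Poisson-plane connection of Lemma \ref{lemdistributiontypicalsegment} (Theorem 3 of \cite{ST4}), not an elementary consequence of \textbf{[Linear sections]} alone.

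The paper avoids exactly this bookkeeping by first computing the \emph{length-weighted} mark distribution, i.e. the marks seen from the typical edge point. There the decomposition $\mu(s,\cdot)=\mu(r,\cdot)+\mu(r,s,\cdot)+\mu^*(s-r,\cdot)$ of the length measure by birth times, together with \textbf{[Iteration stability]} (which identifies $\mu^*(s-r,\cdot)$ in distribution with the length measure of an independent $Y(s-r)$) and the known intensity $L_V(s)=\zeta_2 s^2$, yields the exact identity $L_V(r,s)=2\zeta_2(rs-r^2)$ and hence the uniform length-weighted birth-time density $2/t^2$ on the triangle, with no counting of polygons or segments at all; the conditional length law is then obtained through survival functions of remaining segments (Lemmas \ref{lemmaterm12} and \ref{lemmaterm3}), and Theorem \ref{thm} follows by the de-biasing relation $p=(\bar\ell^{t}_{\varphi=u}/x)\,\widetilde p$, the conditional mean length being $1/(\Lambda([u])s)$. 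If you want to keep your direct counting argument, the honest way to substantiate your two rate claims is precisely to pass through these length intensities and then divide by the conditional mean length -- at which point you have reconstructed the paper's proof.
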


It is interesting to note that the conditional length density  in part (iii) of the previous theorem does not depend on $r$ and the joint distribution of the birth time vector $(\beta,\beta_{\rm carr})$ in part (ii) is independent of  $\Lambda$ (resp. $\cal R$).

Now we derive some consequences of Theorem \ref{thm}. At first, we calculate some marginal and conditional birth time densities:
\begin{corollary}\label{cor1} For the homogeneous random STIT tessellation $Y(t)$ we have
\begin{eqnarray}
\nonumber p_{\beta_{\rm carr}}(r ) &=& {3\over 2}{t^2-r ^2 \over t^3}{\bf 1}\{ 0<r<t\},\\
p_{\beta}(s) &=& {3s ^2\over t^3}{\bf 1}\{ 0<s<t\} .\label{margd}
\end{eqnarray}
Moreover,
\begin{eqnarray}
\nonumber p_{\beta |\beta_{\rm carr} =r}(s  ) &=& {2s \over t^2-r ^2} {\bf 1}\{ 0<r<s<t\},\\
\nonumber p_{\beta_{\rm carr}|\beta =s}(r  ) &=& {1\over s }{\bf 1}\{ 0<r<s\}, 
\end{eqnarray}
i.e. the conditional birth time distribution of the carrying I-polygon, given that the I-segment is born at time $0<s<t$, is the uniform distribution on $(0,s )$.
\end{corollary}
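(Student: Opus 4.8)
\emph{Proof proposal.} The plan is to deduce Corollary \ref{cor1} entirely from the joint birth-time density of Theorem \ref{thm}(ii), namely $p_{\beta,\beta_{\rm carr}}(s,r)=\frac{3s}{t^3}\mathbf{1}\{0<r<s<t\}$, by elementary marginalization and conditioning; no further tessellation-theoretic input is required.

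First I would integrate out one coordinate at a time, being careful with the admissible ranges. Fixing $r\in(0,t)$, the constraint $0<r<s<t$ confines $s$ to $(r,t)$, so $p_{\beta_{\rm carr}}(r)=\int_r^t\frac{3s}{t^3}\,ds=\frac{3}{2t^3}(t^2-r^2)$ on $(0,t)$. Fixing instead $s\in(0,t)$, the coordinate $r$ ranges over $(0,s)$, which gives $p_{\beta}(s)=\int_0^s\frac{3s}{t^3}\,dr=\frac{3s^2}{t^3}$ on $(0,t)$. These are the two claimed marginal densities, the stated indicators merely recording the supports; as a sanity check, both integrate to $1$ over $(0,t)$.

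Next, the two conditional densities follow from the identity (conditional density) $=$ (joint density)/(marginal density) on the relevant support. Dividing the joint density by $p_{\beta_{\rm carr}}(r)$ yields, for $0<r<s<t$,
\[
p_{\beta|\beta_{\rm carr}=r}(s)=\frac{3s/t^3}{(3/(2t^3))(t^2-r^2)}=\frac{2s}{t^2-r^2},
\]
while dividing by $p_{\beta}(s)$ yields, for $0<r<s$,
\[
p_{\beta_{\rm carr}|\beta=s}(r)=\frac{3s/t^3}{3s^2/t^3}=\frac{1}{s},
\]
i.e. the uniform law on $(0,s)$. The only mild subtlety is keeping the nested integration limits consistent with the ordering $r<s<t$ throughout; apart from that, the corollary is a purely computational consequence of Theorem \ref{thm}(ii) and presents no substantial obstacle.
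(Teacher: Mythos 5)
Your proposal is correct and follows exactly the paper's route: the paper derives Corollary \ref{cor1} ``by straightforward integration from Theorem \ref{thm} (ii)'', which is precisely your marginalization of $p_{\beta,\beta_{\rm carr}}(s,r)=\tfrac{3s}{t^3}\mathbf{1}\{0<r<s<t\}$ followed by division to obtain the conditionals. The integration limits and resulting densities all check out.
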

Using now Theorem \ref{thm} (ii) and (iii) we obtain
\begin{corollary}\label{cor2}
The direction $\varphi$ and the birth time pair $(\beta,\beta_{\rm carr})$ of the typical I-segment of $Y(t)$ are independent. Moreover, the joint conditional density of length and birth times, given its direction $u\in{\cal S}_+^2$ of the segment, is

\begin{equation}%\label{jointlengthbirth}
\nonumber p_{\ell,\beta,\beta_{\rm carr}|\varphi=u}(x,s,r) = {3\Lambda([u])s^2\over t^3}{\rm e}^{-\Lambda([u])sx}\ {\bf 1}\{x>0\}{\bf 1}\{0<r<s<t\}.
\end{equation}
\end{corollary}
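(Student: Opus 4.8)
The plan is to derive Corollary \ref{cor2} directly from Theorem \ref{thm}, using only elementary manipulations of conditional densities, so the main work is already done in the theorem and what remains is bookkeeping.

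First I would prove the independence of the direction $\varphi$ from the birth time pair $(\beta,\beta_{\rm carr})$. By Theorem \ref{thm}(ii), the joint density $p_{\beta,\beta_{\rm carr}}(s,r) = (3s/t^3)\mathbf{1}\{0<r<s<t\}$ does not depend on the direction of the typical I-segment; equivalently, the conditional density of $(\beta,\beta_{\rm carr})$ given $\varphi=u$ coincides with this expression for every $u$, hence equals the unconditional density. Since this holds for all $u\in{\cal S}_+^2$, the conditional law of $(\beta,\beta_{\rm carr})$ given $\varphi$ does not depend on $\varphi$, which is exactly the asserted independence. (One small point to check: I should argue that the conditional birth-time law given $(\varphi,\beta,\beta_{\rm carr})$ described in Theorem \ref{thm} is consistent with reading off the pair-marginal given $\varphi=u$ as $p_{\beta,\beta_{\rm carr}}$; this is immediate since integrating the full joint mark density over $\ell$ returns the birth-time marginal, which Theorem \ref{thm}(ii) already identifies.)

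Next I would assemble the joint conditional density of $(\ell,\beta,\beta_{\rm carr})$ given $\varphi=u$. Write it as a product using the chain rule:
\begin{equation}
\nonumber
p_{\ell,\beta,\beta_{\rm carr}\,|\,\varphi=u}(x,s,r) = p_{\ell\,|\,\varphi=u,\beta=s,\beta_{\rm carr}=r}(x)\cdot p_{\beta,\beta_{\rm carr}\,|\,\varphi=u}(s,r).
\end{equation}
By Theorem \ref{thm}(iii) the first factor is $\Lambda([u])s\,{\rm e}^{-\Lambda([u])sx}\mathbf{1}\{x>0\}$, and by the independence just established the second factor is $p_{\beta,\beta_{\rm carr}}(s,r) = (3s/t^3)\mathbf{1}\{0<r<s<t\}$. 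Multiplying gives
\begin{equation}
\nonumber
p_{\ell,\beta,\beta_{\rm carr}\,|\,\varphi=u}(x,s,r) = {3\Lambda([u])s^2\over t^3}\,{\rm e}^{-\Lambda([u])sx}\,\mathbf{1}\{x>0\}\mathbf{1}\{0<r<s<t\},
\end{equation}
which is the claimed formula. A final sanity check is that this density integrates to $1$ over $(x,s,r)$: integrating $x$ over $(0,\infty)$ removes one factor $\Lambda([u])s$, leaving $(3s/t^3)\mathbf{1}\{0<r<s<t\}$, whose integral over the simplex $0<r<s<t$ is $1$, consistent with Corollary \ref{cor1}.

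There is essentially no obstacle here beyond being careful about conditioning: the only subtlety is justifying that Theorem \ref{thm}(ii) can be read as the conditional law of $(\beta,\beta_{\rm carr})$ given $\varphi=u$ for \emph{every} $u$ (not merely the unconditional law), which is what licenses both the independence claim and the substitution in the chain rule. This follows because Theorem \ref{thm} in fact describes the full joint mark distribution through the stated conditional and marginal pieces, and the birth-time conditional in part (ii) is presented without any dependence on $u$; marginalising the full joint density over $\ell$ and over $\varphi$ both yield the same birth-time density, forcing the conditional-given-$\varphi$ to be $u$-free. Everything else is the routine algebra indicated above.
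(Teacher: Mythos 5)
Your assembly of the joint conditional density via the chain rule is fine \emph{once} the independence of $\varphi$ and $(\beta,\beta_{\rm carr})$ is in hand, but your argument for that independence has a genuine gap. Theorem \ref{thm}(ii) states only the \emph{marginal} joint density of $(\beta,\beta_{\rm carr})$; it says nothing about the conditional law of $(\beta,\beta_{\rm carr})$ given $\varphi=u$. The three parts of Theorem \ref{thm} give the marginal of $\varphi$, the marginal of $(\beta,\beta_{\rm carr})$, and the conditional of $\ell$ given $(\varphi,\beta,\beta_{\rm carr})$; this does \emph{not} determine the full joint mark distribution, because the joint law of $(\varphi,\beta,\beta_{\rm carr})$ is not determined by its two marginals. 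Your justification (``marginalising the full joint density over $\ell$ and over $\varphi$ both yield the same birth-time density, forcing the conditional-given-$\varphi$ to be $u$-free'') is circular: a marginal density is by definition free of the integrated-out variable, and one can perfectly well have $p_{\beta,\beta_{\rm carr}|\varphi=u}$ depending on $u$ while its $u$-average reproduces $3s/t^3$. The independence statement is precisely the extra information that Corollary \ref{cor2} adds beyond Theorem \ref{thm}, so it cannot be read off from the theorem's statement alone; note also that $u$ genuinely enters part (iii) through $\Lambda([u])$, so there is no a priori symmetry forcing the birth times to be direction-free.

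What closes the gap is the intermediate formula established \emph{inside} the proof of Theorem \ref{thm}(ii), namely
\begin{equation}
\nonumber
p_{\ell,\beta,\beta_{\rm carr}|\varphi=u}(x,s,r)={3s\over t^3}\,\Lambda([u])s\,{\rm e}^{-\Lambda([u])sx}\,{\bf 1}\{x>0\}{\bf 1}\{0<r<s<t\},
\end{equation}
obtained there by de-weighting the length-weighted conditional density of Lemma \ref{neueskorollar}. This is already the asserted formula of the corollary; integrating over $x\in(0,\infty)$ gives $p_{\beta,\beta_{\rm carr}|\varphi=u}(s,r)=3s/t^3$, which is free of $u$ and hence yields the independence. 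That is exactly the paper's (very short) proof: the corollary is a byproduct of the computation carried out in the theorem's proof, not a consequence of the theorem's statement. To repair your write-up, replace the appeal to Theorem \ref{thm}(ii) by a citation of this conditional density (equivalently, of Lemma \ref{neueskorollar} together with the conditional mean length $\bar{\ell}^{t}_{\varphi=u}=3/(2t\Lambda([u]))$); the remaining chain-rule bookkeeping and the normalisation check you perform are then correct.
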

Integration yields now the marginal density for the length of the typical I-segment of the STIT tessellation $Y(t)$. This density is already known from \cite{ST4}, see also Lemma \ref{lemsurvivalfunctions} below:
\begin{corollary}\label{cor3}
The length density of the typical I-segment of $Y(t)$ equals
 $$\hspace{-3.5cm}p_{\ell}(x) =\int\limits_{{\cal S}_+^2}{3\over\Lambda([u])^3t^3x^4} \left(6-(6+6\Lambda([u])tx\right.$$ 
 $$\hspace{2.5cm}\left.  +3\Lambda([u])^2t^2x^2+\Lambda([u])^3t^3x^3)e^{-\Lambda([u])tx}\right){\cal R}_{\rm typ}(du),\ \ \ x>0.$$ 
In the isotropic case this reduces to $$p_{\ell}(x)={3\over t^3x^4}\left(48-(48+24tx+6x^2t^2+t^3x^3)e^{-{t\over 2}x}\right),\ \ \ x>0.$$
\end{corollary}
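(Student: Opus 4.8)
The plan is to obtain the length density of the typical I-segment by integrating the joint conditional density from Corollary~\ref{cor2} over the birth-time variables $s$ and $r$, and then mixing over the direction $u$ according to the direction distribution ${\cal R}_{\rm typ}$ established in Theorem~\ref{thm}(i). Concretely, starting from
$$p_{\ell,\beta,\beta_{\rm carr}|\varphi=u}(x,s,r) = {3\Lambda([u])s^2\over t^3}{\rm e}^{-\Lambda([u])sx}\,{\bf 1}\{x>0\}{\bf 1}\{0<r<s<t\},$$
first I would integrate out $r$ over $(0,s)$, which contributes a factor $s$ and yields $p_{\ell,\beta|\varphi=u}(x,s)={3\Lambda([u])s^3\over t^3}{\rm e}^{-\Lambda([u])sx}{\bf 1}\{0<s<t\}$ for $x>0$. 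Then I would integrate out $s$ over $(0,t)$, which is the only nontrivial computation: it amounts to evaluating $\int_0^t s^3 {\rm e}^{-as}\,ds$ with $a=\Lambda([u])x$.

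The key step is therefore the incomplete-moment integral $\int_0^t s^3 {\rm e}^{-as}\,ds$. Using repeated integration by parts (or the standard formula $\int_0^t s^n {\rm e}^{-as}\,ds = n!\,a^{-(n+1)} - {\rm e}^{-at}\sum_{k=0}^{n} {n!\over k!}\,t^k a^{-(n+1-k)}$ with $n=3$), one gets
$$\int_0^t s^3{\rm e}^{-as}\,ds = {6\over a^4}-{\rm e}^{-at}\left({6\over a^4}+{6t\over a^3}+{3t^2\over a^2}+{t^3\over a}\right) = {1\over a^4}\Big(6-(6+6at+3a^2t^2+a^3t^3){\rm e}^{-at}\Big).$$
Substituting $a=\Lambda([u])x$ and multiplying by the prefactor $3\Lambda([u])/t^3$ from $p_{\ell,\beta|\varphi=u}$ produces exactly
$$p_{\ell|\varphi=u}(x) = {3\over\Lambda([u])^3 t^3 x^4}\Big(6-(6+6\Lambda([u])tx+3\Lambda([u])^2t^2x^2+\Lambda([u])^3t^3x^3){\rm e}^{-\Lambda([u])tx}\Big),$$
which matches the integrand in the claimed formula.

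To assemble the unconditional density I would invoke Corollary~\ref{cor2}, which states that the direction $\varphi$ is independent of the birth-time pair $(\beta,\beta_{\rm carr})$ and, together with Theorem~\ref{thm}(iii), that the full conditional density of $(\ell,\beta,\beta_{\rm carr})$ given $\varphi=u$ is the one displayed above; hence $p_\ell(x)=\int_{{\cal S}_+^2} p_{\ell|\varphi=u}(x)\,{\cal R}_{\rm typ}(du)$, with ${\cal R}_{\rm typ}$ the law of $\varphi$ by Theorem~\ref{thm}(i). This yields the stated integral representation. Finally, for the isotropic case I would substitute $\Lambda([u])=\tfrac12$ for every $u\in{\cal S}_+^2$ and note that ${\cal R}_{\rm typ}$ is then a probability measure, so the integral disappears; with $\Lambda([u])^3 = \tfrac18$ in the denominator the leading constant becomes $3/(\tfrac18 t^3 x^4)=24/(t^3x^4)$, and expanding $6-(6+3tx+\tfrac34 t^2x^2+\tfrac18 t^3x^3){\rm e}^{-tx/2}$ and multiplying through gives $p_\ell(x)={3\over t^3x^4}\big(48-(48+24tx+6t^2x^2+t^3x^3){\rm e}^{-tx/2}\big)$, as claimed. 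No step presents a genuine obstacle; the only care needed is bookkeeping the powers of $\Lambda([u])$, $t$ and $x$ in the incomplete-moment formula, and the remark that the result agrees with the length density previously derived in \cite{ST4} (and recovered in Lemma~\ref{lemsurvivalfunctions}) serves only as a consistency check.
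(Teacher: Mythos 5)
Your proposal is correct and follows exactly the route the paper takes: integrating the conditional joint density $p_{\ell,\beta,\beta_{\rm carr}|\varphi=u}$ (formula (\ref{neueslabel1}), restated in Corollary \ref{cor2}) over $r$ and $s$, and then mixing over the direction with ${\cal R}_{\rm typ}$. The incomplete-moment integral and the isotropic specialisation with $\Lambda([u])=\tfrac12$ are both carried out correctly, so nothing is missing.
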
 

Now we turn to the distribution of the number of vertices in the relative interior of the typical I-segment. In fact, with Theorem \ref{thm} we obtain the following:
\begin{theorem}\label{thm3}  The probability ${\mathsf p}_n$ that the typical I-segment of $Y(t)$ has exactly $n\in\NN$ vertices in its relative interior is given by \be{eqpn} {\mathsf p}_n=3\int\limits_0^1\int\limits_0^1(1-a)^3{(3-(1-a)(3-b))^n\over(3-(1-a)(2-b))^{n+1}}db\, da.\ee 
\end{theorem}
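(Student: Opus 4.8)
The plan is to count the vertices in the relative interior of the typical I-segment by splitting them according to \emph{when} they are created. A vertex in the relative interior of an I-segment $I$ (born at time $s$, lying in a carrying I-polygon born at time $r<s$) arises in one of two ways: either it is already present at the birth time $s$ of $I$, namely it is a point where $I$ crosses an edge that was already present in the carrying I-polygon at time $s^-$; or it is created after time $s$, when some later cell-dividing plane hits $I$ (equivalently, hits the carrying I-polygon along a line crossing $I$). I would condition on the marks $(\varphi,\beta,\beta_{\rm carr})=(u,s,r)$ and on the length $\ell=x$ of the typical I-segment, use Theorem~\ref{thm} to integrate out these marks at the end, and compute the conditional generating function $\EE[z^N\mid u,s,r,x]$ of the number $N$ of interior vertices.

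For the vertices born strictly after time $s$: by the \textbf{[Linear sections]} / \textbf{[Iteration stability]} properties, the planes that will ever cut the fixed segment $I$ arrive, along $I$, as a Poisson process. More precisely, on the line $L\supset I$ the trace of $Y(t)$ is Poisson with intensity $\Lambda([e(L)])\,t=\Lambda([u])t$ per unit length; of these hit points, the ones already present at time $s$ have intensity $\Lambda([u])s$ and the ones created during $(s,t)$ have intensity $\Lambda([u])(t-s)$ and fall in the relative interior of $I$ (the two endpoints of $I$ are fixed at time $s$). Hence the number of post-$s$ interior vertices on $I$ is, conditionally on $\ell=x$, Poisson with mean $\Lambda([u])(t-s)x$. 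Using Theorem~\ref{thm}(iii), the conditional length density of $I$ given the marks is exponential with parameter $\Lambda([u])s$, so mixing the Poisson law over this exponential length gives a geometric-type generating function: $\EE[z^{N_{\mathrm{post}}}\mid u,s,r] = \Lambda([u])s\big/\big(\Lambda([u])s+\Lambda([u])(t-s)(1-z)\big) = s/(s+(t-s)(1-z))$, which is pleasantly free of $\Lambda$ and of $r$.

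For the vertices already present at time $s$: these are the crossings of $I$ with the edge-skeleton inside the carrying I-polygon $P$, which at time $s^-$ is the restriction to $P$ of a planar STIT-type structure built during the time interval $(r,s)$ on top of the facet born at time $r$. The key point is that along the line $L$ this edge-skeleton again induces a Poisson process of intensity $\Lambda_P([e(L)])\,(s-r)$ for the appropriate induced planar hyperplane measure — this is exactly the reason the authors phrase \textbf{[Linear sections]} in general dimension in Section~\ref{secgeneral}. Conditioning on the length $x$ of $I$ (still exponential with parameter $\Lambda([u])s$ given the marks) and mixing, one again gets a ratio of the same shape, now with weight $s-r$ against $s$, contributing a factor of the form $s\big/(s+(s-r)(1-z)\,c)$ for a suitable constant; a short computation shows the constants combine so that, after also using that $\beta_{\mathrm{carr}}$ is uniform on $(0,s)$ given $\beta=s$ (Corollary~\ref{cor1}) and that $\varphi$ is independent of $(\beta,\beta_{\mathrm{carr}})$ (Corollary~\ref{cor2}), the $\Lambda$-dependence and the direction integral drop out entirely. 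Extracting the coefficient of $z^n$ from the product of the two ratios and then integrating $s$ against its density $p_\beta(s)=3s^2/t^3$ on $(0,t)$ and $r$ uniformly on $(0,s)$, the substitutions $a=r/t$ (or $a=1-s/t$) and $b$ encoding the remaining one-dimensional integral turn the answer into precisely \eqref{eqpn}: the factor $(1-a)^3$ comes from $s^2\,ds$ together with the Jacobian, the $3$ from the normalisation of $p_\beta$, and the rational integrand in $b$ is the coefficient extraction from the product of the two geometric generating functions.

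The main obstacle, and the step requiring genuine care rather than bookkeeping, is the second one: justifying that the edges already present in the carrying I-polygon at the birth time of $I$ cross $L$ as a homogeneous Poisson process with the correct intensity, and identifying that intensity in terms of $\Lambda$, $r$, $s$ and the direction of $I$ within the carrying plane. This needs the iteration-stability structure restricted to the interior of a cell facet, the interpretation of the induced $2$-dimensional hyperplane (line) measure on the carrying plane, and the observation that the direction of $I$ inside that plane is distributed so that the resulting intersection intensity, after the mixing over the exponential length and the uniform $r$, is exactly compensated — this is what makes the final formula independent of $\Lambda$ and lets the direction integral factor out. Everything after that — multiplying the two generating functions, reading off the $z^n$-coefficient, and carrying out the $s$- and $r$-integrals to land on \eqref{eqpn} — is routine.
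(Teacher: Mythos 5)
Your overall strategy --- split the interior vertices of the typical I-segment into those already present at its birth time $s$ and those created afterwards, recognise both counts as conditionally Poisson, mix over the exponential length from Theorem \ref{thm}(iii) and then over the mark density from Corollary \ref{cor2} --- is exactly the paper's. But two of your quantitative steps are wrong, and the first one changes the answer. You identify the vertices created on $I$ during $(s,t]$ with the trace of the ``new'' part of the tessellation on the line $L\supset I$, concluding a Poisson law with mean $\Lambda([u])(t-s)x$. That identification fails because $L$ is not a generic test line: along $I$ it lies inside the tessellation, and a vertex in the relative interior of $I$ is produced by a cell-dividing polygon arriving from \emph{any} of the three cells adjacent to $I$ at time $s$ (the two daughter cells separated by the I-polygon born at time $s$, plus the cell on the far side of the carrying I-polygon). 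By (\ref{BOXPLUS}) the evolution inside each of these three cells during $(s,t]$ is an independent copy of $Y(t-s)$, and by \textbf{[Linear sections]} each contributes an independent Poisson process of intensity $\Lambda([u])(t-s)$ on $I$; the correct parameter is $3\Lambda([u])(t-s)x$. The factor $3$ is not cosmetic: with your parameter the post-birth vertices contribute mean $1/2$ instead of $3/2$, and since the at-birth vertices contribute only $1/2$, the total cannot reach the value $2$ of Corollary \ref{cor4}. (Your at-birth parameter should likewise be pinned down as $\Lambda([u])(s-r)x$, i.e.\ $c=1$: those crossings are the trace on $L$ of the independent $Y(s-r)$ filling the cell behind the carrying facet, again by (\ref{BOXPLUS}) and \textbf{[Linear sections]}; no induced planar hyperplane measure is needed.)

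The second problem is how you combine the two contributions. The pre- and post-birth counts are independent only \emph{conditionally on the length} $x$; once you mix each of them separately over the common exponential length, they become positively correlated, so the generating function of their sum is not the product of the two mixed geometric-type ratios. One must first add the Poisson parameters, obtaining a single Poisson law with parameter $\Lambda([u])x(3t-2s-r)$, and only then integrate against the density $\Lambda([u])s\,e^{-\Lambda([u])sx}$, which yields
\begin{equation*}
\PP(N=n\mid \varphi=u,\beta=s,\beta_{\rm carr}=r)=\frac{s\,(3t-2s-r)^n}{(3t-s-r)^{n+1}},
\end{equation*}
a single ratio rather than a convolution of two (already at $n=0$ the product formula gives a different value). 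From here your endgame is correct and matches the paper: $\Lambda([u])$ cancels, the direction integrates out, and integrating against $p_{\beta,\beta_{\rm carr}}(s,r)=3s/t^3$ with the substitutions $b=1-r/s$ and $a=1-s/t$ produces (\ref{eqpn}).
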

It is interesting to note that the distribution in Theorem \ref{thm3} does not depend on the measure $\Lambda$ (or equivalently the directional distribution $\cal R$) and the time parameter $t$. But, this is evident, because the number of vertices in the relative interior of the typical I-segment does not change when the tessellation is scaled in space. However, the latter is, because of \textbf{[Scaling]}, equivalent to a rescaling of time. Some particular values for ${\mathsf p}_n$ are summarized in the following table.
\begin{center}
\begin{tabular}[t]{|cc||lc|}
\hline
\parbox[0pt][2em][c]{0cm}{} & & ${\mathsf p}_n$ (exact value) & ${\mathsf p}_n$ (numerical value)\\
\hline
\parbox[0pt][2em][c]{0cm}{} & $n=0$ & ${189\over 8}\ln 3-26\ln 2-{15\over 2}$ & $0.43289$\\
\parbox[0pt][2em][c]{0cm}{} & $n=1$ & ${1593\over 16}\ln 3-107\ln 2-35$ & $0.21384$\\
\parbox[0pt][2em][c]{0cm}{} & $n=2$ & ${5319\over 16}\ln 3-350\ln 2-{245\over 2}$ & $0.11841$\\
\parbox[0pt][2em][c]{0cm}{} & $n=3$ & ${31617\over 32}\ln 3-1025\ln 2-{4499\over 12}$ & $0.07075$\\
\hline
\end{tabular}
\end{center}
%\begin{eqnarray}
%\nonumber {\mathsf p}_0 &=& {189\over 8}\ln 3-26\ln 2-{15\over 2} \approx 0.43289\\
%\nonumber {\mathsf p}_1 &=& {1593\over 16}\ln 3-107\ln 2-35 \approx 0.21384\\
%\nonumber {\mathsf p}_2 &=& {5319\over 16}\ln 3-350\ln 2-{245\over 2} \approx 0.11841\\
%\nonumber {\mathsf p}_3 &=& {31617\over 32}\ln 3-1025\ln 2-{4499\over 12} \approx 0.07075.
%\end{eqnarray}

From formula (\ref{eqpn}) in Theorem \ref{thm3} we conclude
\begin{corollary}\label{cor4} The mean number of vertices in the relative interior of the typical I-segment is equal to $2$. Moreover, the variance equals $59/3$ and all higher moments of that random variable are infinite.
\end{corollary}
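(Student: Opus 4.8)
The plan is to read off all moments of the vertex count $N$ directly from the double integral in (\ref{eqpn}) by recognising its summand as a geometric term. Substituting $c=1-a$ and, for fixed such $c$ and $b\in(0,1)$, setting $A:=3-c(3-b)$ and $B:=3-c(2-b)$, one has $B-A=c$ and, for $a,b\in(0,1)$, $0<A<B$ with $B>1$; hence $r:=A/B\in(0,1)$ and the integrand of (\ref{eqpn}) equals $3c^3A^n/B^{n+1}=\tfrac{3c^3}{B}r^n$, so that
$$ {\mathsf p}_n=\int_0^1\!\!\int_0^1\frac{3c^3}{B}\,r^n\,db\,da . $$
As a consistency check, $\sum_{n\ge0}r^n=\tfrac{1}{1-r}=\tfrac{B}{B-A}=\tfrac{B}{c}$, so by Tonelli $\sum_n{\mathsf p}_n=\int_0^1\!\!\int_0^1 3c^2\,db\,da=1$, confirming that (\ref{eqpn}) is a genuine probability mass function.

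For the first moment I would use $\sum_{n\ge0}nr^n=r/(1-r)^2$ together with $1-r=c/B$, which turns this expression into $AB/c^2$; Tonelli then gives
$$ \EE N=\int_0^1\!\!\int_0^1\frac{3c^3}{B}\cdot\frac{AB}{c^2}\,db\,da=\int_0^1\!\!\int_0^1 3cA\,db\,da . $$
The inner integral is $\int_0^1 3cA\,db=3c\bigl(3-\tfrac52c\bigr)=9c-\tfrac{15}{2}c^2$, and integrating over $c\in(0,1)$ yields $\EE N=\tfrac92-\tfrac52=2$. For the second moment, $\sum_{n\ge0}n^2r^n=r(1+r)/(1-r)^3$ becomes $A(A+B)B/c^3$ under $1-r=c/B$, so $\EE N^2=\int_0^1\!\!\int_0^1 3A(A+B)\,db\,da$; expanding $A(A+B)$ as a polynomial in $b$ with coefficients polynomial in $c$ and integrating over $b$ gives $\int_0^1 A(A+B)\,db=18-27c+\tfrac{61}{6}c^2$, and a final integration in $c$ gives $\EE N^2=3\cdot\tfrac{71}{9}=\tfrac{71}{3}$. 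Hence $\Var N=\tfrac{71}{3}-2^2=\tfrac{59}{3}$.

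For the moments of order $k\ge3$ I would show the corresponding integral diverges, the blow-up being localised at the corner $a=1$ of the unit square, where $r\uparrow1$. It is classical that $\sum_{n\ge0}n^kr^n=A_k(r)/(1-r)^{k+1}$ for the $k$-th Eulerian polynomial $A_k$, which is continuous on $[0,1]$ with $A_k(1)=k!>0$; choose $\delta\in(0,1)$ with $A_k(r)\ge k!/2$ whenever $1-r\le\delta$, and note that $B>1$ gives $1-r=c/B<c$, so this bound holds on the entire strip $\{0<c<\delta\}$, where
$$ \frac{3c^3}{B}\sum_{n\ge0}n^kr^n\;\ge\;\frac{3c^3}{B}\cdot\frac{k!}{2}\Bigl(\frac{B}{c}\Bigr)^{k+1}\;=\;\frac{3k!}{2}\,B^k\,c^{\,2-k}\;\ge\;\frac{3k!}{2}\,c^{\,2-k}. $$
By Tonelli, $\EE N^k\ge\int_0^1\!\!\int_0^\delta\tfrac{3k!}{2}\,c^{\,2-k}\,dc\,db=\infty$ whenever $2-k\le-1$, i.e. for every integer $k\ge3$; hence all moments of order $\ge3$ are infinite.

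The whole argument is essentially mechanical once the geometric-series bookkeeping is arranged; the only genuinely non-routine point — and the one I would flag as the crux — is recognising that the singularity $c^{\,2-k}$ at $c=0$ is integrable exactly for $k\le2$ and not for $k\ge3$, which is precisely what separates the finiteness of the mean and variance from the divergence of all higher moments (and, as already remarked after Theorem \ref{thm3}, every quantity here is automatically independent of $\Lambda$ and of $t$).
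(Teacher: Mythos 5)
Your computation of the mean and the variance is correct and follows essentially the same route as the paper: interchange the sum over $n$ with the double integral in (\ref{eqpn}) (justified by nonnegativity), recognise the summand as a geometric-type term, and evaluate the resulting polynomial integrals. Your substitution $c=1-a$, $A=3-(1-a)(3-b)$, $B=3-(1-a)(2-b)$, $r=A/B$ with $1-r=c/B$ is just a tidier bookkeeping of what the paper does directly; your intermediate integrands $3cA$ and $3A(A+B)$ agree with the paper's $3(1-a)(a(3-b)+b)$ and its quadratic in $a,b$, and the values $\EE N=2$, $\EE N^2=71/3$, $\Var N=59/3$ match.

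Where you genuinely go beyond the paper is the third claim. The paper's proof of Corollary \ref{cor4} does not actually establish that the moments of order $k\ge 3$ are infinite; it only asserts this and remarks informally that it is unsurprising in view of the infinite higher moments of the typical I-segment length. Your argument -- writing $\sum_n n^k r^n = A_k(r)/(1-r)^{k+1}$ with the Eulerian polynomial $A_k$, bounding $A_k(r)\ge k!/2$ near $r=1$, using $B>1$ to get $1-r<c$ and the lower bound $\tfrac{3k!}{2}c^{2-k}$ on the strip $0<c<\delta$, and observing that $\int_0^\delta c^{2-k}\,dc$ diverges exactly for $k\ge 3$ -- is a complete and correct proof of that assertion, and it correctly isolates the source of the divergence at the corner $a=1$ (late-born segments). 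This fills a gap the paper leaves open rather than deviating from its method.
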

Note that the mean value is in accordance with the result in \cite{tw}. Furthermore, the non-existence of higher moments is not surprising, because also moments of order $\geq 3$ of the length of the typical I-segment of $Y(t)$ are infinite, too.

\begin{figure}
\begin{center}
\includegraphics[width=0.9\columnwidth]{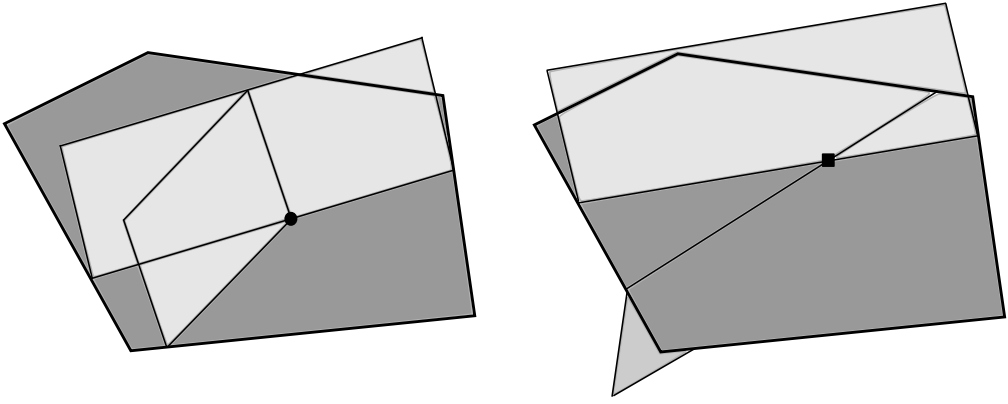}
\caption{A T-vertex (left) and an X-vertex (right) in a spatial STIT tessellation.}
\label{figknoten}
\end{center}
\end{figure}
As a last result we would like to point out that Theorem \ref{thm3} admits a refinement, which is needed in \cite{tw2}. We note at first that a spatial STIT tessellation has exactly two different types of vertices called T-vertices and X-vertices, see \cite{tw}-\cite{wc}. Illustrations of these two types of vertices are shown in Figure \ref{figknoten}. Given a carrying I-polygon (dark grey), a T-vertex is generated if two further I-polygons intersect in the \textit{same} half-space determined by the carrying I-polygon. An X-vertex is generated by an intersection of two further polygons in the two \textit{different} half-spaces specified by the carrying I-polygon.

For the typical I-segment we denote by ${\mathsf p}_{m,n}$, $m,n\in{\Bbb N}$, the probability that it has exactly $m$ vertices of type T and $n$ vertices of type X in its relative interior.
\begin{theorem}\label{thm2}
For a homogeneous spatial STIT tessellation and $m,n\in{\Bbb N}$ we have $${\mathsf p}_{m,n}=3\cdot 2^m{m+n\choose m}\int\limits_0^1\int\limits_0^1 (1-a)^3a^m{(1-(1-a)(1-b))^n\over(3-(1-a)(2-b))^{m+n+1}}db\,da.$$
\end{theorem}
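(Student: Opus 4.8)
The plan is to refine the counting argument that yields Theorem \ref{thm3} by keeping track of the two kinds of vertices separately. Recall that a vertex in the relative interior of the typical I-segment is born either before the segment (an intersection with an already-existing edge of the carrying I-polygon) or after it (a new I-polygon crossing the segment); in the spatio-temporal picture a vertex born at time $\tau$ lands on the segment at a rate governed by the intersection density of new planes with a fixed line, and the conditioning data are the direction $\varphi=u$, the birth time $\beta=s$ of the I-segment, the birth time $\beta_{\rm carr}=r$ of the carrying I-polygon, and the length $\ell=x$. By Theorem \ref{thm} (iii) the length is exponential with parameter $\Lambda([u])s$ given $(\varphi,\beta,\beta_{\rm carr})=(u,s,r)$, and by Theorem \ref{thm} (ii) the pair $(\beta,\beta_{\rm carr})$ has density $3s/t^3$ on $0<r<s<t$, independently of direction. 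First I would set up the conditional generating function of the vertex count: given $(u,s,r,x)$, the process of vertices along the segment (those inherited from the carrying polygon and those arriving afterward) is, by the Poissonian linear-section property \textbf{[Linear sections]} together with iteration stability \textbf{[Iteration stability]}, a mixed Poisson structure whose intensity integrates the hitting rate $\Lambda([u])$ over the appropriate time intervals, giving a total parameter of the form $\Lambda([u])x\cdot(\text{time factor})$.

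The key new ingredient is the T/X dichotomy. A vertex of the segment corresponds to a secondary I-polygon hitting the line carrying the segment; whether it is of type T or type X is decided by which of the two half-spaces bounded by the carrying I-polygon contains the \emph{further} subdividing polygon that created the adjacent cell structure — and, crucially, given the carrying polygon, these two sides are chosen independently and with equal probability in the STIT splitting mechanism (this is exactly the content of the T vs.\ X description preceding the theorem, and the reason the factor $2^m$ appears: a T-vertex requires the two relevant polygons to fall on the same side, which can be either side, while an X-vertex has a fixed configuration). So, conditionally on having $k$ interior vertices in total, the split into $(m,n)$ with $m+n=k$ is a thinning: each vertex is independently declared "T with its two polygons on the left", "T with both on the right", or "X", and collecting the combinatorics produces the factor $2^m\binom{m+n}{m}$ and replaces the single variable $b$ (which in Theorem \ref{thm3} parametrizes the relative birth time of a vertex with respect to the carrying polygon) by the refined weights $a^m$ and $(1-(1-a)(1-b))^n$ in the integrand. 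Concretely, I would compute $\mathsf{E}[z_{\rm T}^{M}z_{\rm X}^{N}\mid u,s,r,x]$, recognize it as the same exponential-type expression as in the proof of Theorem \ref{thm3} but with $z$ replaced by a linear combination $\tfrac12 z_{\rm T}^2\cdot(\ldots)+z_{\rm X}\cdot(\ldots)$ reflecting the side-choice, then extract the coefficient of $z_{\rm T}^m z_{\rm X}^n$.

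After that, the remaining work is integration, exactly as in Theorem \ref{thm3}: integrate out the exponential length variable $x$ against $\Lambda([u])s\,e^{-\Lambda([u])sx}\,dx$, which removes the $\Lambda([u])$ and $s$ dependence (this is the mechanism that makes $\mathsf p_{m,n}$ independent of $\Lambda$ and $t$, consistent with \textbf{[Scaling]}), then integrate $(s,r)$ against $3s/t^3\,\mathbf 1\{0<r<s<t\}$, and finally substitute $a=r/s$, $b=$ (the relative-time variable of a generic vertex) to land on the stated double integral over $[0,1]^2$. I would double-check the result by verifying $\sum_{m,n}\mathsf p_{m,n}=1$ and, more tellingly, that $\sum_n \mathsf p_{m,n}$ summed appropriately — or rather the full bivariate sum evaluated at $z_{\rm T}=z_{\rm X}=z$ — collapses to formula (\ref{eqpn}) of Theorem \ref{thm3}, which forces the identity $3-(1-a)(3-b)=\tfrac12(1-(1-a)(1-b))\cdot 2 + (\text{X-term})$ to be bookkept correctly. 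The main obstacle is precisely this bookkeeping step: correctly identifying, in the spatio-temporal construction, which auxiliary polygons' half-space assignments are independent and equiprobable given the carrying polygon, and translating the resulting trinomial thinning into the substitution that turns the scalar weight $(3-(1-a)(3-b))^n$ into the bivariate weight $2^m\binom{m+n}{m}a^m(1-(1-a)(1-b))^n$ — everything else is the same integral calculus already carried out for Theorem \ref{thm3}.
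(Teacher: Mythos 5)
Your overall strategy --- condition on $(\varphi,\beta,\beta_{\rm carr},\ell)=(u,s,r,x)$, identify the conditional joint law of the T-count $M$ and the X-count $N$, mix against the density from Corollary \ref{cor2} and the directional distribution, and substitute $a=1-s/t$, $b=1-r/s$ --- is exactly the paper's, and the integration steps you describe do carry through unchanged from Theorem \ref{thm3}. The gap is in the one step you yourself flag as the obstacle: the identification of the conditional law of $(M,N)$. Your mechanism (a single Poisson stream of interior vertices, trinomially thinned by an ``independent and equiprobable'' choice of half-space, encoded by a substitution of the form $\tfrac12 z_{\rm T}^2(\ldots)+z_{\rm X}(\ldots)$) is not what happens. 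The I-polygon born at time $s$, of which the I-segment is a side, lies on one \emph{fixed} side of the carrying I-polygon; a vertex in the relative interior of the segment is of type T or X according to whether the \emph{other} polygon through that vertex lies on the same side as the $s$-polygon or on the opposite side. There is no symmetric left/right choice, no category ``T with both polygons on the other side'', and no reason for a $z_{\rm T}^2$. What the paper uses instead is that, conditionally on $(u,s,r,x)$, the counts $M$ and $N$ are independent Poisson variables with \emph{different} parameters: $M$ is Poisson with parameter $2\Lambda([u])x(t-s)$, the factor $2$ coming from the \emph{two} sub-cells adjacent to the segment on the $s$-polygon's side, each independently subdivided during $(s,t]$ (by \textbf{[Iteration stability]} and \textbf{[Linear sections]}); and $N$ is Poisson with parameter $\Lambda([u])x(t-r)$, collecting the vertices already present at birth ($N_{\rm birth}$, parameter $\Lambda([u])x(s-r)$, all of type X) together with those arriving from behind the carrying polygon during $(s,t]$ (parameter $\Lambda([u])x(t-s)$). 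In particular the conditional T:X intensity ratio is $2(t-s):(t-r)$, which depends on $r$ and $s$ and is not $1{:}1$, so an equiprobable thinning cannot reproduce it; the factor $2^m$ records the two adjacent cells, not a left/right symmetry of T-configurations.

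A symptom of the same confusion is your consistency identity: after integrating out $x$ and substituting, the two Poisson parameters become the weights $2a$ and $1-(1-a)(1-b)$, and indeed $2a+\bigl(1-(1-a)(1-b)\bigr)=3-(1-a)(3-b)$, which is how the bivariate formula collapses to (\ref{eqpn}) at $z_{\rm T}=z_{\rm X}$. In your check you attach the factor $2$ to the term $1-(1-a)(1-b)$, i.e.\ you have the T- and X-weights interchanged relative to the stated formula, where $2^m a^m$ goes with the T-count $m$ and $(1-(1-a)(1-b))^n$ with the X-count $n$. Once the two Poisson parameters above are in place, the rest of your outline (the $x$-integration removing $\Lambda([u])$, the $(s,r)$-integration against $3s/t^3$, and the change of variables) is correct and identical to the paper's argument.
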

From this  formula the following can be concluded:
\begin{corollary}\label{cor5}
The mean number of T-vertices and of X-vertices in the relative interior of the typical I-segment equals $1$, respectively. Furthermore, the variance of the number of T-type vertices is $8$ and that of the number of X-type vertices is $11/3$ (all higher moments are infinite). In addition, the covariance of the number of T- and X-vertices equals $8$.
\end{corollary}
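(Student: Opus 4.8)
The plan is to derive Corollary \ref{cor5} from the explicit double-integral formula for ${\mathsf p}_{m,n}$ in Theorem \ref{thm2}, working directly with the joint probability generating function
$$G(x,y):=\sum_{m,n\ge 0}{\mathsf p}_{m,n}\,x^m y^n.$$
First I would substitute the formula for ${\mathsf p}_{m,n}$ into this sum and interchange summation and integration (justified by nonnegativity, Tonelli). The key observation is that $2^m\binom{m+n}{m}$ times the geometric-type factors collapses: for fixed $a,b$, writing $\alpha=2x$ and $\beta=(1-(1-a)(1-b))y$ and $D=3-(1-a)(2-b)$, the inner sum is $\sum_{m,n}\binom{m+n}{m}\alpha^m\beta^n D^{-(m+n+1)}=D^{-1}\sum_{k\ge 0}\big((\alpha+\beta)/D\big)^k = \big(D-\alpha-\beta\big)^{-1}$, valid when $\alpha+\beta<D$. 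Hence
$$G(x,y)=3\int_0^1\!\!\int_0^1\frac{(1-a)^3}{\,3-(1-a)(2-b)-2x-(1-(1-a)(1-b))y\,}\,db\,da,$$
and one checks $G(1,1)=1$ recovers the normalization, while $G(x,1)$ and $G(1,y)$ give the marginal pgfs of the T- and X-counts respectively.

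Next I would extract the moments by differentiating $G$ at $(1,1)$. The means are $\EE M=\partial_x G(1,1)$ and $\EE N=\partial_y G(1,1)$; the second factorial moments are $\partial_x^2 G(1,1)$, $\partial_y^2 G(1,1)$, and $\partial_x\partial_y G(1,1)$; from these the variances and the covariance follow by the standard formulas $\Var M=\partial_x^2G(1,1)+\EE M-(\EE M)^2$ and $\Cov(M,N)=\partial_x\partial_y G(1,1)-\EE M\,\EE N$. Each derivative of $G$ is again an integral over $(a,b)\in(0,1)^2$ of an elementary rational function: writing $\Phi(a,b):=3-(1-a)(2-b)-2x-(1-(1-a)(1-b))y$, we have $\partial_x G=3\int\int 2(1-a)^3\Phi^{-2}$, $\partial_y G = 3\int\int (1-(1-a)(1-b))(1-a)^3\Phi^{-2}$, and so on, all evaluated at $x=y=1$ where $\Phi(a,b)\big|_{(1,1)}=3-(1-a)(2-b)-2-(1-(1-a)(1-b)) = (1-a)b$. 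So at $(1,1)$ every integrand is a monomial in $(1-a)$ and $b$ divided by a power of $(1-a)b$, and the resulting one-dimensional integrals in $a$ and $b$ are immediate (they are of the form $\int_0^1 (1-a)^j\,da$ and $\int_0^1 b^{-k}\,db$). The claimed values $\EE M=\EE N=1$, $\Var M=8$, $\Var N=11/3$, $\Cov(M,N)=8$ should drop out; the consistency check $\EE M+\EE N=2$ agrees with Corollary \ref{cor4}, and one also recovers ${\mathsf p}_n=\sum_{m}{\mathsf p}_{m,n}$ matching Theorem \ref{thm3} by setting $x=1$ in $G$.

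The main obstacle — and the one subtlety worth care — is the appearance of the divergent integral $\int_0^1 b^{-k}\,db$ for $k\ge 1$ once we take enough derivatives: this is exactly the mechanism producing the stated \emph{infinite} higher moments, so one must track precisely which factorial moments are finite. At $(1,1)$ the denominator power in $\partial_x^r\partial_y^s G$ is $\Phi^{-(r+s+1)}\sim ((1-a)b)^{-(r+s+1)}$, partially offset by numerator factors of $(1-a)$ (from the measure $(1-a)^3$ and from each $x$-derivative bringing a $2(1-a)$) but crucially \emph{not} by positive powers of $b$ for the pure $x$-derivatives; a careful bookkeeping shows $\EE M(M-1)$ and $\EE N(N-1)$ and $\EE MN$ are finite (the $b$-integral is $\int_0^1 b^{-1+(\text{something}\ge 1)}db$ only marginally, so one must verify the numerator supplies at least $b^1$ in each of these three cases) while all third-order factorial moments diverge because the $b$-integrand behaves like $b^{-1}$ or worse with no compensating numerator power of $b$. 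Concretely, each extra $x$-derivative adds a factor $2(1-a)$ to the numerator but raises the denominator to $\Phi^{-(\text{power}+1)}$, contributing an extra $((1-a)b)^{-1}$; the $(1-a)$ part stays integrable but the $b^{-1}$ accumulates, and after reaching total order $3$ the $b$-integral diverges. I would phrase this as: the pgf $G(x,y)$ is finite and twice continuously differentiable in a neighbourhood of $(1,1)$ but not three-times differentiable, which simultaneously yields all the finite moments in the corollary and the assertion that higher moments are infinite. No genuinely hard estimate is needed — just honest power-counting in $b$ near $b=0$ and in $(1-a)$ near $a=1$, followed by the elementary integrations.
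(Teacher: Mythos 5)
Your generating-function strategy is in essence the same computation the paper performs: the paper sums the negative-binomial series over $n$ first, obtaining $\sum_n{\mathsf p}_{m,n}=3\int_0^1\int_0^1\frac{(1-a)^3}{1+a}\bigl(\frac{2a}{1+a}\bigr)^m\,db\,da$, and then sums the moment series in $m$ (and symmetrically for $N$), so your $G(x,y)$ merely packages all of these at once. However, your execution contains concrete errors. Theorem \ref{thm2} carries the factor $2^m a^m$, so the correct substitution is $\alpha=2ax$, not $\alpha=2x$; with the correct $\alpha$ one gets
$$G(x,y)=3\int_0^1\int_0^1\frac{(1-a)^3}{3-(1-a)(2-b)-2ax-(1-(1-a)(1-b))y}\,db\,da,$$
and the denominator at $x=y=1$ equals $1-a$, \emph{not} $(1-a)b$ (with your $\alpha=2x$ it would even be the negative quantity $-(1-a)$, so the series would not converge at $(1,1)$). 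This invalidates your entire last paragraph: there is no singularity in $b$ at all, and the divergence of all third-order factorial moments comes solely from the factor $(1-a)^{-(r+s+1)}$ against the numerator $(1-a)^3$, i.e.\ from the behaviour near $a=1$, not from $\int_0^1 b^{-k}\,db$. Also, $G(1,y)$ is the pgf of the X-count alone; the object matching Theorem \ref{thm3} is $G(z,z)$, the pgf of $M+N$, since ${\mathsf p}_n=\sum_{m+n'=n}{\mathsf p}_{m,n'}$ and not $\sum_m{\mathsf p}_{m,n}$.

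More importantly, the corrected computation does not return the covariance you (and the corollary) announce. One finds $\partial_x\partial_yG(1,1)=12\int_0^1\int_0^1 a\bigl(1-(1-a)(1-b)\bigr)\,db\,da=5$, i.e.\ $\EE[MN]=5$ and $\Cov(M,N)=5-1=4$. This is consistent with Corollary \ref{cor4}, since $\Var M+\Var N+2\Cov(M,N)=8+11/3+8=59/3$; note that the paper's own derivation computes $59/3-11/3-8$, which equals $2\Cov(M,N)$ rather than $\Cov(M,N)$. So the value $8$ that you assert "should drop out" would not drop out of your (corrected) calculation. The means $\EE M=\EE N=1$ and the variances $8$ and $11/3$ do follow from the corrected $G$ exactly as you describe, in agreement with the paper's direct summation.
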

In \cite{tw,wc} it is shown that the proportion of the intensities of T- and X-vertices in a spatial STIT tessellation is $2:1$.   Whereas any T-vertex is located in the relative interior of exactly one I-segment an X-vertex is in the relative interior of two I-segments. This confirms the first statement of Corollary \ref{cor5}.

\section{Proofs}\label{secproofs}

Before proving our results for the \textit{typical} I-segment, we first consider the mark distribution in a typical edge point, i.e. the \textit{length-weighted} mark distribution. The corresponding results are derived in Subsection \ref{seclengthweighted}. The proofs of Theorem \ref{thm}, Theorem \ref{thm3} and Theorem \ref{thm2} and their corollaries are the content of Subsection \ref{secprooftheorem}. Some preparatorial material is collected in Subsection \ref{secIsegmentdistributions}.

\subsection{Some preparations}\label{secIsegmentdistributions}

We denote by $\cal L$ the measurable space of line segments in ${\Bbb R}^3$, and equip it with the Borel $\sigma$-field induced by the Hausdorff distance, cf. \cite{s/w}. The distribution on $\cal L$ of the typical I-segment of a STIT tessellation $Y(t)$ for fixed $t>0$ and fixed measure $\Lambda$ is denoted by ${\Bbb D}^{Y(t)}$ and, similarly, the distribution of the typical edge of a Poisson plane tessellation with intensity measure $s\Lambda$ will be denoted by ${\Bbb D}^{P(s)}$.

In Theorem 3 in \cite{ST4} the following is shown:
\begin{lemma}\label{lemdistributiontypicalsegment}
For any non-negative measurable function $f:{\cal L}\rightarrow{\Bbb R}$ we have $$\int\limits f(L)\ {\Bbb D}^{Y(t)}(dL)=\int\limits_0^t\int\limits{3s^2\over t^3}f(L)\ {\Bbb D}^{P(s)}(dL)ds.$$
\end{lemma}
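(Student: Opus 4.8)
\textbf{Proof plan for Lemma~\ref{lemdistributiontypicalsegment}.}

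The plan is to reduce the statement to a known decomposition of the typical I-segment of $Y(t)$ via the iteration structure of STIT tessellations. The key heuristic is that by \textbf{[Iteration stability]} and \textbf{[Scaling]}, an I-segment born at time $s$ in $Y(t)$ ``looks like'' an edge of a Poisson plane tessellation locally in space and time around its moment of birth, and the weight $3s^2/t^3$ is precisely the birth-time density $p_\beta(s)$ of the typical I-segment (see Corollary~\ref{cor1}, equation~(\ref{margd})). So the first step is to set up the Palm calculus for the marked segment process of I-segments of $Y(t)$ carefully: fix a bounded window $W$, consider $Y(t,W)$, and express $\int f(L)\,{\Bbb D}^{Y(t)}(dL)$ as a limit of normalized sums $\sum_L f(L)$ over I-segments $L$ of $Y(t,W)$ divided by the expected number of such segments per unit volume. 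One then stratifies this sum according to the birth time $\beta(L)=s$ of each I-segment.

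The second and central step is to identify, for each fixed $s\in(0,t)$, the contribution of I-segments born at time exactly $s$ (in the infinitesimal sense $\beta(L)\in(s,s+ds)$). Here I would invoke the global spatio-temporal construction of $(Y(t))_{t>0}$ from \cite{mnwconstr}: at time $s$, the cells of $Y(s)$ are being divided by fresh hyperplanes according to the birth intensity governed by $\Lambda$, and the \textbf{[Poisson typical cell]} property together with \textbf{[Spatial consistency]} tells us that the facial structure being created in the dividing plane $c\cap h$ is, within that plane, distributed like the edge-skeleton of a planar Poisson plane tessellation with the appropriate intensity measure; by \textbf{[Scaling]} this intensity is $s$ times the induced planar hyperplane measure, so the newly created edges have the distribution ${\Bbb D}^{P(s)}$. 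The crucial point is that an I-segment of $Y(t)$ is (by the characterization in Section~\ref{secresults}) exactly such an edge of a dividing polygon, and its full extent as an I-segment — maximal collinear connected piece — is determined at its birth time and is not subsequently lengthened (only subdivided by interior vertices, which do not affect the segment as an element of $\cal L$). Therefore the typical I-segment born in $(s,s+ds)$ has law ${\Bbb D}^{P(s)}$, and integrating against the birth-time density $p_\beta(s)=3s^2/t^3$ yields the claimed formula. One should double-check the normalization: the intensity of I-segments of $Y(t)$ decomposes as $\int_0^t (\text{intensity of edges born at time } s)\,ds$, and the mixing weight $3s^2/t^3$ is exactly the resulting normalized birth-time density, consistent with Corollary~\ref{cor1}.

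The main obstacle I anticipate is making the ``born at time $s$ = Poisson edge with intensity $s\Lambda|_{\text{plane}}$'' identification fully rigorous, because it requires disentangling two things: (a) the I-segments of $Y(t)$ that are \emph{created} at time $s$ by a division of a cell of $Y(s^-)$, versus (b) the requirement that the \emph{maximal} collinear connected piece in the closed set $Y(t)$ agrees with the edge of the dividing polygon $c\cap h$. Point (b) needs the observation that a cell facet lies in a single carrying I-polygon and that a newly born dividing plane cannot merge two distinct previously-existing collinear edges into one longer I-segment — this is a structural feature of the non-face-to-face cell-division construction and should be argued from the recursive nature of the split. Once (a)+(b) are in hand, the rest is a routine application of the refined Campbell theorem for marked point processes and the mark decomposition, and the formula follows. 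Since the statement is quoted as Theorem~3 of \cite{ST4}, I would in practice cite that reference for the technical core and present the above only as the conceptual justification.
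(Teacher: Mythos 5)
The paper offers no proof of this lemma at all: it is imported verbatim as Theorem 3 of \cite{ST4}, so your concluding plan of citing that reference for the technical core is exactly what the authors do, and your heuristic decomposition by birth time is a fair account of why the mixture representation holds. One caveat: quoting the birth-time density $3s^2/t^3$ from Corollary \ref{cor1} as an input would be circular, since that corollary is derived from Theorem \ref{thm}, which in the paper rests on this very lemma; the weight has to come independently, e.g.\ from the decomposition of the I-segment number intensity over birth times, as you briefly indicate.
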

In order to apply this proposition we will need the length distribution of the edges in Poisson plane tessellations. 
We consider the survival function of the conditional length distribution of the typical edge of $P(s)$, given its direction $u\in{\cal S}_+^2$, i.e.
$$H_{\ell|\varphi=u}^{P(s)} (x) =\int\limits {\bf 1}\{\text{length}(L)>x\} \  {\Bbb D}^{P(s)}_{\varphi(L)=u}(dL)$$
where ${\Bbb D}^{P(s)}_{\varphi(L)=u}(dL)$ denotes the respective conditional distribution of the typical edge.
Analogously, the length-weighted case $\widetilde{H}_{\ell|\varphi=u}^{P(s)}$ is defined. In the following we will always use a tilde to indicate that we refer to a length-weighted distribution.

\begin{lemma}\label{lemsurvivalPoisson}
The conditional length distribution of the typical edge in the Poisson plane tessellation $P(s)$  has the survival function 
$$H_{\ell|\varphi=u}^{P(s)} (x) = e^{-\Lambda([u])sx}\ \ \ \text{for}\ \ \ x>0$$
and the survival function of the conditional length distribution of the length-weighted typical edge in the Poisson plane tessellation $P(s)$ equals 
$$\widetilde{H}_{\ell|\varphi=u}^{P(s)} (x) = (1+\Lambda([u])sx)e^{-\Lambda([u])sx}\ \ \ \text{for}\ \ \ x>0.$$ 
\end{lemma}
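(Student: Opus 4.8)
The plan is to reduce the claim to the one-dimensional structure of the edge skeleton of a Poisson plane tessellation along a fixed direction $u$. Recall that an edge of $P(s)$ directed along $u$ arises as the intersection of two planes $h(p_1,v_1)$ and $h(p_2,v_2)$ of the underlying Poisson plane process, whose common line is parallel to $u$; this line is then chopped into segments by all the \emph{other} planes of the process. Conditioning on the direction $u$ of the edge is therefore the same as conditioning on the line $\ell_u$ carrying it, and along that line the remaining planes of the Poisson process cut out a stationary Poisson point process. First I would identify the intensity of this induced point process: by the same computation underlying the \textbf{[Linear sections]} property (applied here to a Poisson plane tessellation with intensity measure $s\Lambda$ rather than to $Y(t)$), the planes hitting a unit segment on a line of direction $u$ form a Poisson process of intensity $s\Lambda([u])$, where $\Lambda([u])$ is exactly the quantity defined in the excerpt (the $\Lambda$-measure of planes meeting the unit segment from $o$ to $u$). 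Hence, conditionally on direction $u$, the edge containing a \emph{fixed} point of $\ell_u$ is the interval between the two nearest cut points on either side, i.e. a sum of two independent $\mathrm{Exp}(s\Lambda([u]))$ spacings.

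From here the two survival functions follow from standard renewal/Palm considerations for a stationary Poisson process on the line. The \emph{typical} edge corresponds, via Palm theory for the stationary point process of edge midpoints (equivalently, the stationary point process of cut points on $\ell_u$), to the interval between two consecutive Poisson points seen from a typical such point; this interval is a single $\mathrm{Exp}(s\Lambda([u]))$ spacing, which gives immediately
$$H_{\ell|\varphi=u}^{P(s)}(x) = e^{-\Lambda([u])sx},\qquad x>0.$$
The \emph{length-weighted} typical edge corresponds instead to the length-biasing of that exponential spacing — equivalently, to the spacing covering a fixed (Lebesgue-typical) point of $\ell_u$, which by the classical inspection/waiting-time paradox for a Poisson process is the sum of two independent $\mathrm{Exp}(s\Lambda([u]))$ variables, i.e.\ a Gamma$(2,s\Lambda([u]))$ variable. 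Its survival function is
$$\widetilde{H}_{\ell|\varphi=u}^{P(s)}(x) = (1+\Lambda([u])sx)\,e^{-\Lambda([u])sx},\qquad x>0,$$
as claimed. To keep the argument self-contained one may instead verify the length-weighting directly: if $f_\ell$ is the unconditional (typical) length density $s\Lambda([u])e^{-s\Lambda([u])x}$, then the length-weighted density is $x f_\ell(x)/\EE[\ell] = x\,(s\Lambda([u]))^2 e^{-s\Lambda([u])x}$, whose survival function is the stated $(1+\Lambda([u])sx)e^{-\Lambda([u])sx}$.

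The one genuine technical point — the step I expect to be the main obstacle — is justifying rigorously that conditioning the typical edge of the \emph{three-dimensional} Poisson plane tessellation on its direction really does reduce to the one-dimensional picture above, with the correct intensity $s\Lambda([u])$ for the induced cut-point process on the carrying line. This requires a careful Palm-theoretic bookkeeping: the edges of $P(s)$ of direction $u$ are parametrised by their carrying lines (themselves a stationary line process obtained from pairs of planes whose directions $v,w$ satisfy $v^\perp\cap w^\perp\cap\SS^2_+ = u$, weighted by the area factor $[v,w]$), and then by position along the line; one must disintegrate the Palm distribution of the edge process accordingly and check that the along-line component is governed only by the intensity $s\Lambda([u])$, independently of how the line arose. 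This is exactly the content underlying the distributions $\widetilde{\cal R}$ and ${\cal R}_{\rm typ}$ recalled from \cite{HS3} in the excerpt, and I would invoke that reference (together with the \textbf{[Linear sections]}-type computation for Poisson plane tessellations) for this disintegration rather than reprove it. Once that reduction is in place, everything else is the elementary exponential/Gamma computation above.
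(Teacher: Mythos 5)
Your argument is correct and essentially coincides with the paper's proof: both reduce the claim to the fact that, conditionally on the direction $u$, the carrying line of the edge is cut by the remaining planes according to a homogeneous Poisson point process of intensity $\Lambda([u])s$ (the paper justifies the reduction you flag as the main technical point by a twofold application of Slivnyak's theorem, once for each of the two generating planes), and then both conclude that the typical edge length is exponential and the length-weighted one is Gamma$(2,\Lambda([u])s)$.
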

\begin{proof}
The intersection of the homogeneous Poisson plane process $P(s)$ with a fixed plane parallel to $u\in{\cal S}_+^2$ is a Poisson line process. The intersection of this line process with a fixed line parallel to $u$ is a Poisson point process with intensity $\Lambda([u])s$, see e.g. \cite[Theorem 4.4.6]{s/w} or, more specifically, (4.31) ibidem. Thus a twofold application of Slivnyak's theorem for Poisson processes yields that the typical edge point of a Poisson plane tessellation is a.s. located on a length-weighted segment  that is generated by a linear homogeneous Poisson point process, and, under the condition that the direction is $u$, its intensity is $\Lambda([u])s$. Hence the length of the typical edge, under the condition $\varphi =u$ is exponentially distributed with parameter $\Lambda([u])s$, and the respective length weighted distribution is the Gamma distribution with the parameter $(2,\Lambda([u])s)$.\hfill $\Box$
\end{proof}
As a corollary we obtain the  mean length of the typical edge in a Poisson plane tessellation.
\begin{lemma}\label{lemmeanlengthPoisson}
The  mean length  of the typical edge in the Poisson plane tessellation $P(s)$ is
$$\bar\ell^{P(s)}={1\over s}{\zeta_2\over\zeta_3}  .$$
\end{lemma}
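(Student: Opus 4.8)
\textbf{Proof proposal for Lemma \ref{lemmeanlengthPoisson}.}
The plan is to compute the mean length of the typical edge of the Poisson plane tessellation $P(s)$ by conditioning on the direction $\varphi = u$ and then integrating out $u$ against the directional distribution ${\cal R}_{\rm typ}$ of the typical edge. From Lemma \ref{lemsurvivalPoisson} the conditional length, given $\varphi = u$, is exponentially distributed with parameter $\Lambda([u])s$, so its conditional mean is $\big(\Lambda([u])s\big)^{-1}$. Hence
\begin{equation}
\nonumber
\bar\ell^{P(s)} = \int\limits_{{\cal S}_+^2} \frac{1}{\Lambda([u])s}\ {\cal R}_{\rm typ}(du) = \frac{1}{s}\int\limits_{{\cal S}_+^2}\frac{1}{\Lambda([u])}\ {\cal R}_{\rm typ}(du).
\end{equation}

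Next I would substitute the definition (\ref{defrtyp}) of ${\cal R}_{\rm typ}$, namely ${\cal R}_{\rm typ}(du) = \frac{\zeta_2}{\zeta_3}\Lambda([u])\,\widetilde{{\cal R}}(du)$. The factor $\Lambda([u])$ cancels exactly the $1/\Lambda([u])$ in the integrand, leaving
\begin{equation}
\nonumber
\bar\ell^{P(s)} = \frac{1}{s}\cdot\frac{\zeta_2}{\zeta_3}\int\limits_{{\cal S}_+^2}\widetilde{{\cal R}}(du) = \frac{1}{s}\cdot\frac{\zeta_2}{\zeta_3},
\end{equation}
since $\widetilde{{\cal R}}$ is a probability measure on ${\cal S}_+^2$. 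This gives the claimed formula.

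The only point requiring a word of care is the interchange of the conditional expectation with the outer integral over directions, i.e. justifying $\EE[\ell] = \EE\big[\EE[\ell\mid\varphi]\big]$; this is immediate from Tonelli's theorem since all quantities are non-negative, and the representation of the typical edge length distribution as a mixture over $\varphi\sim{\cal R}_{\rm typ}$ of exponentials is exactly what Lemma \ref{lemsurvivalPoisson} (together with the identification of ${\cal R}_{\rm typ}$ as the directional distribution of the typical edge, recalled after (\ref{defrtyp})) provides. There is no real obstacle here; the entire content of the lemma is the algebraic cancellation of $\Lambda([u])$ against its reciprocal once the explicit form of ${\cal R}_{\rm typ}$ is inserted. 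One could alternatively avoid ${\cal R}_{\rm typ}$ altogether and argue directly from the length-weighted picture in the proof of Lemma \ref{lemsurvivalPoisson}: the mean length of the typical edge is the intensity of edge length per typical edge, which by the two-fold Slivnyak argument reduces to a ratio of Poisson plane mean-value functionals; but the route via (\ref{defrtyp}) is the shortest and is the one I would present.
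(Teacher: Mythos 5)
Your proposal is correct and follows essentially the same route as the paper: both condition on the direction $\varphi=u$, use the exponential law from Lemma \ref{lemsurvivalPoisson} (the paper integrates the survival function $H_{\ell|\varphi=u}^{P(s)}$ over $x$, you take the conditional mean directly, which is the same computation), invoke Theorem 1 of \cite{HS3} to identify ${\cal R}_{\rm typ}$ as the directional distribution of the typical edge, and then let the factor $\Lambda([u])$ in (\ref{defrtyp}) cancel against its reciprocal. No gaps.
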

\begin{proof}
With  Theorem 1 in \cite{HS3}, which says that ${\cal R}_{\rm typ}$ is the directional distribution of the typical edge in a Poisson plane tessellation, and (\ref{defrtyp}) we obtain
\begin{eqnarray}
\nonumber \bar\ell^{P(s)} &=& \int\limits_0^\infty H_{\ell}^{P(s)} (x) dx = \int\limits_0^\infty  \int\limits_{{\cal S}_+^2} H_{\ell|\varphi=u}^{P(s)} (x) {\cal R}_{\rm typ}(du)\, dx\\
\nonumber &=& {\zeta_2\over\zeta_3}\int\limits_{{\cal S}_+^2} \int\limits_0^\infty e^{-\Lambda([u])sx}  \Lambda([u])  dx \, \widetilde{{\cal R}}(du) = {1\over s}{\zeta_2\over\zeta_3},
\end{eqnarray}
which proves our claim.\hfill $\Box$
\end{proof}

We derive now from Lemma \ref{lemdistributiontypicalsegment} a similar representation for the distribution $\widetilde{{\Bbb D}}^{Y(t)}$ of the length-weighted typical I-segment of $Y(t)$ in terms of $\widetilde{{\Bbb D}}^{P(s)}$, the distribution of the length-weighted typical edge of a Poisson plane tessellation. 
\begin{lemma}\label{lemdistributiontypicalsegmentweighted}
For any non-negative measurable function $f:{\cal L}\rightarrow{\Bbb R}$ we have $$\int\limits f(L)\ \widetilde{{\Bbb D}}^{Y(t)}(dL)=\int\limits_0^t\int\limits{2s\over t^2}f(L)\ \widetilde{{\Bbb D}}^{P(s)}(dL)ds.$$
\end{lemma}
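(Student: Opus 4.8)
The plan is to derive Lemma \ref{lemdistributiontypicalsegmentweighted} from Lemma \ref{lemdistributiontypicalsegment} by the standard length-weighting procedure, reweighting both the STIT side and the Poisson side by length and then re-normalising. Concretely, for a non-negative measurable $f:\cal L\to\RR$ one has, by definition of the length-weighted typical I-segment,
\[
\int f(L)\,\widetilde{{\Bbb D}}^{Y(t)}(dL)=\frac{1}{\bar\ell^{Y(t)}}\int \Len(L)f(L)\,{\Bbb D}^{Y(t)}(dL),
\]
and similarly $\widetilde{{\Bbb D}}^{P(s)}$ is obtained from ${\Bbb D}^{P(s)}$ by weighting with $\Len(L)$ and dividing by $\bar\ell^{P(s)}$. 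So first I would apply Lemma \ref{lemdistributiontypicalsegment} to the function $L\mapsto\Len(L)f(L)$, giving
\[
\int \Len(L)f(L)\,{\Bbb D}^{Y(t)}(dL)=\int_0^t\frac{3s^2}{t^3}\int \Len(L)f(L)\,{\Bbb D}^{P(s)}(dL)\,ds
=\int_0^t\frac{3s^2}{t^3}\,\bar\ell^{P(s)}\int f(L)\,\widetilde{{\Bbb D}}^{P(s)}(dL)\,ds.
\]

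Next I would substitute $\bar\ell^{P(s)}=\frac{1}{s}\frac{\zeta_2}{\zeta_3}$ from Lemma \ref{lemmeanlengthPoisson}, so that the inner integrand becomes $\frac{3s^2}{t^3}\cdot\frac{1}{s}\frac{\zeta_2}{\zeta_3}=\frac{3s}{t^3}\frac{\zeta_2}{\zeta_3}$. Dividing by $\bar\ell^{Y(t)}$ then yields
\[
\int f(L)\,\widetilde{{\Bbb D}}^{Y(t)}(dL)=\frac{1}{\bar\ell^{Y(t)}}\frac{\zeta_2}{\zeta_3}\int_0^t\frac{3s}{t^3}\int f(L)\,\widetilde{{\Bbb D}}^{P(s)}(dL)\,ds.
\]
To match the claimed kernel $\frac{2s}{t^2}$, it suffices to show $\bar\ell^{Y(t)}=\frac{3}{2t}\frac{\zeta_2}{\zeta_3}$, since then $\frac{1}{\bar\ell^{Y(t)}}\frac{\zeta_2}{\zeta_3}\cdot\frac{3}{t^3}=\frac{2}{t^2}$. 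This mean-length computation is the one genuine input: I would obtain it by setting $f\equiv 1$ (so $\widetilde{{\Bbb D}}^{Y(t)}$ integrates to $1$) — but more directly, by applying Lemma \ref{lemdistributiontypicalsegment} to $f=\Len$ and using $\bar\ell^{P(s)}=\frac1s\frac{\zeta_2}{\zeta_3}$:
\[
\bar\ell^{Y(t)}=\int \Len(L)\,{\Bbb D}^{Y(t)}(dL)=\int_0^t\frac{3s^2}{t^3}\,\bar\ell^{P(s)}\,ds=\frac{\zeta_2}{\zeta_3}\int_0^t\frac{3s}{t^3}\,ds=\frac{3}{2t}\frac{\zeta_2}{\zeta_3},
\]
as required. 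Plugging this back in gives exactly $\int f(L)\,\widetilde{{\Bbb D}}^{Y(t)}(dL)=\int_0^t\int\frac{2s}{t^2}f(L)\,\widetilde{{\Bbb D}}^{P(s)}(dL)\,ds$.

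The argument is essentially bookkeeping, so there is no real obstacle; the only point requiring care is the double normalisation — one must use the correct mean lengths on both the STIT and Poisson sides, and it is reassuring that the two occurrences of $\zeta_2/\zeta_3$ cancel, leaving a kernel that, like the original kernel $3s^2/t^3$, is independent of $\Lambda$ (and indeed integrates to $1$ over $s\in(0,t)$ after accounting for the implicit normalisation, consistent with $\int_0^t\frac{2s}{t^2}ds=1$). One should also note that the interchange of integration is justified by Tonelli's theorem since all integrands are non-negative, and that $\Len$ is a measurable function on $\cal L$ with respect to the Hausdorff-distance $\sigma$-field, so all the integrals above are well defined.
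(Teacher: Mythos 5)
Your argument is correct and is essentially the paper's own proof: both reduce the claim to Lemma \ref{lemdistributiontypicalsegment} applied to $L\mapsto \operatorname{length}(L)f(L)$, rewrite the inner Poisson integral via $\bar\ell^{P(s)}$, and cancel the normalising constants using $\bar\ell^{P(s)}=\tfrac{1}{s}\tfrac{\zeta_2}{\zeta_3}$ and $\bar\ell^{Y(t)}=\tfrac{3}{2t}\tfrac{\zeta_2}{\zeta_3}$. The only (harmless, indeed slightly more self-contained) deviation is that you derive the latter mean length by taking $f=\operatorname{length}$ in Lemma \ref{lemdistributiontypicalsegment}, whereas the paper quotes it from \cite{tw}.
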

\begin{proof} Denoting the mean length of the typical I-segment of $Y(t)$ by $\bar\ell^t$ and that of the typical edge of a Poisson plane tessellation $P(s)$ by $\bar\ell^{P(s)}$ we have
\begin{eqnarray}
\nonumber \int\limits f(L)\widetilde{{\Bbb D}}^{Y(t)}(dL) &=& {1\over\bar\ell^t}\int\limits f(L)\text{length}(L)\ {\Bbb D}^{Y(t)}(dL)\\
\nonumber &=&  {1\over\bar\ell^t}\int\limits_0^t\int\limits {3s^2\over t^3}f(L)\text{length}(L)\ {\Bbb D}^{P(s)}(dL)ds\\
\nonumber &=& {1\over\bar\ell^t}\int\limits_0^t{3s^2\over t^3}\bar\ell^{P(s)}\left(\int\limits f(L)\ \widetilde{{\Bbb D}}^{P(s)}(dL)\right)ds
\end{eqnarray}
with Lemma \ref{lemdistributiontypicalsegment}. Using now $\bar\ell^t={3\over 2t}{\zeta_2\over\zeta_3}$ from \cite{tw} and Lemma \ref{lemmeanlengthPoisson}, we conclude the assertion of the lemma.
\hfill $\Box$
\end{proof}

Now we consider the survival function of the conditional length distribution of the typical I-segment of $Y(t)$, given its direction $u\in{\cal S}_+^2$, i.e.
$$H_{\ell|\varphi=u}^{t} (x) =\int\limits {\bf 1}\{\text{length}(L)>x\} \  {\Bbb D}^{Y(t)}_{\varphi(L)=u}(dL)$$
where ${\Bbb D}^{Y(t)}_{\varphi(L)=u}(dL)$ denotes the respective conditional distribution of the typical I-segment.
Analogously, the length-weighted case $\widetilde{H}_{\ell|\varphi=u}^{t}$ is defined.
The two distributional identities in Lemma \ref{lemdistributiontypicalsegment} and Lemma \ref{lemdistributiontypicalsegmentweighted} together with Lemma \ref{lemsurvivalPoisson} imply the following result.
\begin{lemma}\label{lemsurvivalfunctions} The length distribution of the typical I-segment of $Y(t)$, given its direction $u\in{\cal S}_+^2$, has the survival function 
$$H_{\ell|\varphi=u}^{t} (x) =\int\limits_0^t{3s^2\over t^3}e^{-\Lambda([u])sx}ds$$ 
and the length distribution of the length-weighted typical I-segment of $Y(t)$, given its direction $u\in{\cal S}_+^2$, has the survival function 
$$\widetilde{H}_{\ell|\varphi=u}^{t} (x) =\int\limits_0^t{2s\over t^2}(1+\Lambda([u])sx)e^{-\Lambda([u])sx}ds.$$
\end{lemma}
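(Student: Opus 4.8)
The plan is to read off both survival functions directly from the three preparatory lemmas, by testing the distributional identities of Lemma~\ref{lemdistributiontypicalsegment} and Lemma~\ref{lemdistributiontypicalsegmentweighted} against integrands that separate the length and the direction of a segment, and then disintegrating with respect to direction.

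As a first step I would record that the directional marginal of ${\Bbb D}^{P(s)}$ is the same for every $s>0$: by Theorem~1 in \cite{HS3} (already invoked in the proof of Lemma~\ref{lemmeanlengthPoisson}) it equals ${\cal R}_{\rm typ}$, the dependence on $s$ dropping out because the factor $s$ in $s\Lambda([u])$ cancels against the normalisation; similarly, the directional marginal of the length-weighted distribution $\widetilde{{\Bbb D}}^{P(s)}$ equals $\widetilde{{\cal R}}$ for every $s>0$, and $\widetilde{{\cal R}}$ depends only on ${\cal R}$. Feeding into Lemma~\ref{lemdistributiontypicalsegment} a function $f$ that depends on $L$ only through $\varphi(L)$ and using $\int_0^t 3s^2 t^{-3}\,ds=1$ then shows that the directional marginal of ${\Bbb D}^{Y(t)}$ is ${\cal R}_{\rm typ}$ as well; in the same way, $\int_0^t 2s t^{-2}\,ds=1$ together with Lemma~\ref{lemdistributiontypicalsegmentweighted} gives that the directional marginal of $\widetilde{{\Bbb D}}^{Y(t)}$ is $\widetilde{{\cal R}}$.

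Next I would apply Lemma~\ref{lemdistributiontypicalsegment} with $f(L)={\bf 1}\{\text{length}(L)>x\}\,g(\varphi(L))$ for an arbitrary bounded Borel function $g$ on ${\cal S}_+^2$. Disintegrating both sides over the direction — all the directional marginals in sight being ${\cal R}_{\rm typ}$, and $\Lambda([u])$ depending on $L$ only through $\varphi(L)=u$ — and using Fubini's theorem to pull the $s$-integral outside the direction disintegration turns the identity into
\[
\int_{{\cal S}_+^2} g(u)\,H_{\ell|\varphi=u}^{t}(x)\,{\cal R}_{\rm typ}(du)
=\int_{{\cal S}_+^2} g(u)\left(\int_0^t{3s^2\over t^3}H_{\ell|\varphi=u}^{P(s)}(x)\,ds\right){\cal R}_{\rm typ}(du).
\]
Since $g$ is arbitrary, the bracketed expressions agree for ${\cal R}_{\rm typ}$-almost every $u$, and substituting $H_{\ell|\varphi=u}^{P(s)}(x)=e^{-\Lambda([u])sx}$ from Lemma~\ref{lemsurvivalPoisson} yields the first asserted formula. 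Repeating the argument verbatim with Lemma~\ref{lemdistributiontypicalsegmentweighted}, now disintegrating over $\widetilde{{\cal R}}$ and inserting $\widetilde{H}_{\ell|\varphi=u}^{P(s)}(x)=(1+\Lambda([u])sx)e^{-\Lambda([u])sx}$, produces the length-weighted formula.

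The only point that needs care is the compatibility of the direction disintegration of ${\Bbb D}^{Y(t)}$ (resp. $\widetilde{{\Bbb D}}^{Y(t)}$) with the family of disintegrations of ${\Bbb D}^{P(s)}$ (resp. $\widetilde{{\Bbb D}}^{P(s)}$) that appears under the $s$-integral in Lemmas~\ref{lemdistributiontypicalsegment} and \ref{lemdistributiontypicalsegmentweighted}; this is precisely why I first identify every directional marginal with the single measure ${\cal R}_{\rm typ}$ (resp. $\widetilde{{\cal R}}$), after which Fubini legitimises the interchange and the formulas fall out. The resulting identities hold for ${\cal R}_{\rm typ}$-almost every (resp. $\widetilde{{\cal R}}$-almost every) direction $u$, which is all that a statement about a conditional distribution asserts; the displayed right-hand sides then serve as the canonical versions.
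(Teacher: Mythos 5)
Your proof is correct and follows exactly the route the paper intends: the paper gives no explicit argument for this lemma, merely asserting that it follows from Lemma~\ref{lemdistributiontypicalsegment}, Lemma~\ref{lemdistributiontypicalsegmentweighted} and Lemma~\ref{lemsurvivalPoisson}, and your test-function/disintegration argument is precisely the standard way to fill in that assertion. The care you take with the $s$-independence of the directional marginals (${\cal R}_{\rm typ}$ resp.\ $\widetilde{{\cal R}}$) is exactly the point that makes the interchange legitimate, so nothing is missing.
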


\subsection{Length-weighted mark distributions}\label{seclengthweighted}

For construction times $0<r<s$ we consider the states $Y(r)$ and $Y(s)$ and introduce three different random sets of unions of I-segments of $Y(s)$. These are:
\begin{itemize}
\item[(a)] the union of all I-segments of $Y(s)$,
\item[(b)] the union of all those I-segments of $Y(s)$ with birth time in $(r,s]$ that appear {\em in the interior} of the cells of $Y(r)$,
\item[(c)] the union of I-segments of $Y(s)$ with birth time in $(r,s]$ that appear {\em on the facets} of the cells of $Y(r)$.
\end{itemize}
For these random sets the corresponding length measures, the length intensities and the directional distributions in the typical edge point (the length-weighted case) are, respectively, denoted by
\begin{itemize}
\item[(a)]  $\mu(s,.)$, $L_V(s)={\Bbb E}\mu(s,[0,1]^3)$ and $\widetilde{\mathbb Q}^s_\varphi$,
\item[(b)] $\mu^*(s-r,.)$, $ L^*_V(s-r)={\Bbb E}\mu^*(s-r,[0,1]^3)$ and  $\widetilde{\mathbb Q}^{*,s-r}_\varphi$,
\item[(c)] $\mu(r,s,.)$, $L_V (r,s)={\Bbb E}\mu(r,s,[0,1]^3)$ and $\widetilde{\mathbb Q}^{r,s}_\varphi$.
\end{itemize}
For a Borel set $B\subset{\Bbb R}^3$, e.g. $\mu^*(s-r,B)$ is the total length of edges of $Y(s)$ with birth time in $(r,s]$, which are located in the interior of the cells of $Y(r)$ and in $B$.
It is evident that, for $0<r<s$, we have for the length measures 
\begin{equation}\label{lengthmeasure}
\mu (s,\cdot)= \mu (r,\cdot) + \mu (r,s,\cdot ) + \mu^* (s-r,\cdot ).
\end{equation}
From the STIT property the following relations can be deduced:
\begin{lemma}\label{neueslemma} We have $L_V^*(s-r)=L_V(s-r)$ and $L_V(s)=\zeta_2s^2$ with $\zeta_2$ given by (\ref{defzeta2}). Furthermore, $\widetilde{\mathbb Q}^{*,s-r}_\varphi =\widetilde{\mathbb Q}^{s-r}_\varphi=\widetilde{{\Bbb Q}}_\varphi^s=\widetilde{{\cal R}}$ with $\widetilde{\cal R}$ defined via (\ref{deftildeR}).
\end{lemma}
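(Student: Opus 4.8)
The plan is to combine the decomposition (\ref{lengthmeasure}) with the \textbf{[Iteration stability]} and \textbf{[Scaling]} properties, so as to reduce everything to a single integral-geometric computation for the newly born I-segments.

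\textbf{Iteration and scaling.} First I would dispose of the two ``starred'' identities. Using $Y(s)\overset{D}{=}Y(r)\boxplus Y(s-r)$, the tessellation $Y(s)$ is obtained from $Y(r)$ by placing an independent copy $Y_c(s-r)\overset{D}{=}Y(s-r)$ into every cell $c$ of $Y(r)$. Every I-polygon of $Y(s)$ born after time $r$ lies inside a single cell $c$ of $Y(r)$ (its supporting plane is chopped off by $\partial c$), hence so is every I-segment whose carrying I-polygon is also born after $r$; conversely, these are precisely the I-segments of the inserted copies $Y_c(s-r)$ lying in the interior of the corresponding $c$. Restricting $Y_c(s-r)$ to $c$ only cuts I-polygons along $\partial c$, which affects neither the length nor the direction of the I-segments in question, and the I-segments it creates on $\partial c$ itself are born in $(r,s]$ on a facet of a cell of $Y(r)$, so they belong to set (c), not to set (b). Conditioning on $Y(r)$, using independence and stationarity of the $Y_c(s-r)$ and the fact that the cells tile $\RR^3$, this yields $\EE\mu^*(s-r,B)=L_V(s-r)\,\mathrm{Vol}(B)$ for every Borel $B$, hence $L_V^*(s-r)=L_V(s-r)$, and the same length- and direction-preserving correspondence gives $\widetilde{\mathbb Q}^{*,s-r}_\varphi=\widetilde{\mathbb Q}^{s-r}_\varphi$. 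Next, by \textbf{[Scaling]} every $Y(t)$ is a fixed spatial dilation of every other; since a dilation multiplies all edge lengths by one common factor and leaves all directions unchanged, $L_V(t)/t^2$ and the length-weighted I-segment direction distribution of $Y(t)$ are both independent of $t$, so $\widetilde{\mathbb Q}^{s-r}_\varphi=\widetilde{\mathbb Q}^{s}_\varphi=:\widetilde{\mathbb Q}_\varphi$. What remains is to show $L_V(t)=\zeta_2 t^2$ and $\widetilde{\mathbb Q}_\varphi=\widetilde{{\cal R}}$.

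\textbf{The integral-geometric computation.} For this I would use the spatio-temporal picture. An I-segment is born exactly when a cell-splitting plane -- the plane of a newly born I-polygon, with normal $v$ -- crosses a facet of that cell lying on an already existing I-polygon with normal $u$; the new I-segment then lies on the intersection line of the two planes and so has direction $u^\perp\cap v^\perp\cap{\cal S}_+^2$. Now, $S_V(Y(t))=t$ (\textbf{[Interpretation of $t$ and ${\cal R}$]}) together with the self-similarity of the construction coming from \textbf{[Iteration stability]} (applied to $Y(\tau+d\tau)\overset{D}{=}Y(\tau)\boxplus Y(d\tau)$) shows that the I-polygons of $Y(t)$ born during $(\tau,\tau+d\tau)$ form, per unit volume, new surface of area $d\tau$ with area-weighted normal distribution ${\cal R}$, while those born up to time $\tau$ constitute a stationary surface of area intensity $\tau$ with normal distribution ${\cal R}$. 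The standard intersection formula for two independent stationary surfaces in $\RR^3$ -- their intersection is a $1$-dimensional set whose length intensity is the product of the area intensities times the parallelogram area $[u,v]$ integrated against the two normal distributions, the pieces inheriting direction $u^\perp\cap v^\perp\cap{\cal S}_+^2$ -- then shows that the I-segments born in $(\tau,\tau+d\tau)$ contribute, per unit volume, length proportional to $\tau\,\zeta_2\,d\tau$ and carry length-weighted direction distribution $\widetilde{{\cal R}}$ exactly as in (\ref{deftildeR}). Integrating over $\tau\in(0,t)$ gives $L_V(t)=\zeta_2 t^2$ and $\widetilde{\mathbb Q}_\varphi=\widetilde{{\cal R}}$, which with the previous paragraph also gives $\widetilde{\mathbb Q}^{*,s-r}_\varphi=\widetilde{\mathbb Q}^{s-r}_\varphi=\widetilde{\mathbb Q}^{s}_\varphi=\widetilde{{\cal R}}$. (Equivalently, one may argue that since each I-segment of $Y(t)$ arises as the intersection of two I-polygons exactly as each edge of a Poisson plane tessellation with intensity measure $t\Lambda$ arises as the intersection of two planes, the edge-length intensity and the length-weighted edge-direction distribution of $Y(t)$ coincide with those of that Poisson plane tessellation, the latter being $\widetilde{{\cal R}}$ by the fact recorded before Theorem \ref{thm}, taken from \cite{HS3}.)

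\textbf{Main obstacle.} The delicate point is the quantitative part of the second step: justifying that in every infinitesimal time window the newly born I-polygons carry area-weighted normal distribution ${\cal R}$ with total area $d\tau$ per unit volume -- i.e.\ that the surface-area production of the process is time-homogeneous and directionally governed by ${\cal R}$ at each instant -- which is where the self-similarity built from \textbf{[Iteration stability]} and \textbf{[Scaling]} must be invoked with care, and then matching this with the correct integral-geometric intersection density (in particular pinning down the constant $\zeta_2$, and not a multiple of it). By contrast, the bookkeeping in the first step is routine once (\ref{lengthmeasure}) is taken for granted: only the harmlessness, for lengths and directions, of restricting the inserted copies $Y_c(s-r)$ to the cells of $Y(r)$ needs to be checked.
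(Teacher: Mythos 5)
Your first paragraph is essentially the paper's own argument: the proof in the paper disposes of $L_V^*(s-r)=L_V(s-r)$ and $\widetilde{\mathbb Q}^{*,s-r}_\varphi=\widetilde{\mathbb Q}^{s-r}_\varphi$ by a single appeal to the iteration relation (\ref{BOXPLUS}), exactly as you do, and your use of \textbf{[Scaling]} to see that $\widetilde{\mathbb Q}^{s}_\varphi$ does not depend on $s$ is a clean, self-contained substitute for the paper's citation of Eq.~(14) of \cite{nw3d}. The difference lies in the second half: for $L_V(s)=\zeta_2 s^2$ and $\widetilde{\mathbb Q}^{s}_\varphi=\widetilde{{\cal R}}$ the paper gives no argument at all but cites Eqs.~(8) and (11) of \cite{nw3d}, whereas you attempt a derivation.

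That derivation has a genuine quantitative gap, and it is exactly the one you flag: the constant. The ``intersection of two independent stationary surfaces'' formula gives a length-production rate $\zeta_2\,\tau\,d\tau$ per unit volume at time $\tau$, and integrating yields $L_V(t)=\zeta_2 t^2/2$ --- half the correct value. The missing factor of $2$ is not recovered by refining the intersection formula; it comes from a feature of the construction that the ``new surface meets old surface'' picture hides. When a cell $c$ is split by a plane $h$, the I-segments born are $h\cap\partial c$, so the instantaneous production rate in a set $B$ is $\sum_c\int \mathcal{H}^1(h\cap\partial c\cap B)\,\Lambda(dh)$ (the death rate $\Lambda([c])$ cancels against the normalisation of the splitting law), and summing $\int_{\partial c}$ over all cells counts every facet of the existing surface \emph{twice}, because each facet bounds two cells, each independently subject to division. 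Combined with the translative formula $\int_{\mathbb R}\mathcal{H}^1\bigl(h(p,u)\cap S\bigr)\,dp=\int_S[u,n(y)]\,\mathcal{H}^2(dy)$ and $S_V(\tau)=\tau$, this gives the rate $2\zeta_2\tau\,d\tau$ and hence $L_V(t)=\zeta_2 t^2$. For the same reason your parenthetical alternative is false as stated: the edge-length intensity of the Poisson plane tessellation with intensity measure $t\Lambda$ is $\zeta_2 t^2/2$, not $\zeta_2 t^2$, so the two models do \emph{not} share $L_V$; they share only the length-weighted direction distribution $\widetilde{{\cal R}}$, which is all that survives normalisation and which is why your conclusion $\widetilde{\mathbb Q}^{s}_\varphi=\widetilde{{\cal R}}$ is unaffected by the missing constant. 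In summary, the directional assertions of the lemma are covered by your argument, but $L_V(s)=\zeta_2 s^2$ is not: as written, your computation establishes $\zeta_2 s^2/2$.
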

\begin{proof} Resorting to (\ref{BOXPLUS}) we find $L^*_V (s-r)=L_V(s-r)$ and
$\widetilde{\Bbb Q}^{*,s-r}_\varphi =\widetilde{\Bbb Q}^{s-r}_\varphi$. Equation $L_V(s)=\zeta_2s^2$ is (8) in \cite{nw3d} and the fact that $\widetilde{{\Bbb Q}}_\varphi^s$ does not depend on $s$ follows from \cite[Eq. (14)]{nw3d}. The remaining equality $\widetilde{{\Bbb Q}}_\varphi^s=\widetilde{{\cal R}}$ is a consequence of Eq. (11) ibidem.\hfill $\Box$
\end{proof}
In a next step we consider for the STIT tessellation $Y(t)$ with $t>0$ the joint length-weighted distribution $\widetilde{\mathbb Q}^t_{\varphi , \beta , \beta_{\rm carr}}$ of direction $\varphi\in{\cal S}_+^2$ and birth times $\beta,\beta_{\rm carr}\in(0,t]$.
\begin{lemma}\label{lengthweibirthdir}
For any Borel set $U\subset{\cal S}_+^{2}$ and $0<r<s<t$ we have
$$\widetilde{\mathbb Q}^t_{\varphi , \beta , \beta_{\rm carr}} (U\times (r,s] \times (0,r])= \frac{2(rs-r^2)}{t^2} \widetilde{\cal R}(U).$$ The corresponding joint density of $(\beta , \beta_{\rm carr})$ with respect to the Lebesgue measure on $(0,t)^2$ is
$$
\widetilde p_{ \beta, \beta_{\rm carr}} (s,r) =  \frac{2}{t^2}{\bf 1}\{0<r<s<t\},
$$
i.e. $\varphi$ and $( \beta , \beta_{\rm carr})$ are independent and $( \beta , \beta_{\rm carr})$ is uniformly distributed on the triangle $\{(s,r)\in {\mathbb R}^2:\, 0<r<s<t\}$.
\end{lemma}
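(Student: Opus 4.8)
The plan is to evaluate $\widetilde{\mathbb Q}^t_{\varphi,\beta,\beta_{\rm carr}}$ on the product sets $U\times(r,s]\times(0,r]$, which (via differences) generate the relevant product $\sigma$-field, by matching the I-segments of $Y(t)$ selected by these marks with the family (c) whose length measure is $\mu(r,s,\cdot)$. First I would observe that, by the spatial and temporal consistency of the global construction of $(Y(t))_{t>0}$, an I-segment of $Y(t)$ born up to time $s$ is, together with all its marks, an I-segment of $Y(s)$; and among the I-segments of $Y(s)$ born in $(r,s]$, those with carrying I-polygon born in $(0,r]$ are exactly those lying on a facet of a cell of $Y(r)$. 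Indeed, the carrying I-polygon of such a segment, being born at time $\le r$, is part of the cell boundaries of $Y(r)$, so it is (locally) a facet of $Y(r)$ carrying the segment; conversely, an I-segment born after $r$ on a facet of $Y(r)$ has that facet's I-polygon (born $\le r$) as its carrying I-polygon, and maximality forces $\beta_{\rm carr}\le r$. Hence the $\varphi\in U$-restricted length intensity of the selected I-segments of $Y(t)$ equals $\widetilde{\mathbb Q}^{r,s}_\varphi(U)\,L_V(r,s)$, and normalising by the total edge length intensity $L_V(t)=\zeta_2t^2$ of $Y(t)$ gives
$$\widetilde{\mathbb Q}^t_{\varphi,\beta,\beta_{\rm carr}}\big(U\times(r,s]\times(0,r]\big)=\frac{\widetilde{\mathbb Q}^{r,s}_\varphi(U)\,L_V(r,s)}{\zeta_2t^2}.$$

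Next I would determine the two unknowns on the right using (\ref{lengthmeasure}) and Lemma \ref{neueslemma}. Taking the $[0,1]^3$-measure and expectation in (\ref{lengthmeasure}) gives $L_V(s)=L_V(r)+L_V(r,s)+L_V^*(s-r)$; with $L_V(s)=\zeta_2s^2$, $L_V(r)=\zeta_2r^2$ and $L_V^*(s-r)=L_V(s-r)=\zeta_2(s-r)^2$ from Lemma \ref{neueslemma}, this yields $L_V(r,s)=\zeta_2\big(s^2-r^2-(s-r)^2\big)=2\zeta_2(rs-r^2)$. Restricting each length measure in (\ref{lengthmeasure}) to segments with direction in $U$ and proceeding in the same way gives the mixture identity $L_V(s)\widetilde{\mathbb Q}^s_\varphi(U)=L_V(r)\widetilde{\mathbb Q}^r_\varphi(U)+L_V(r,s)\widetilde{\mathbb Q}^{r,s}_\varphi(U)+L_V^*(s-r)\widetilde{\mathbb Q}^{*,s-r}_\varphi(U)$; since Lemma \ref{neueslemma} identifies $\widetilde{\mathbb Q}^s_\varphi$, $\widetilde{\mathbb Q}^r_\varphi$ and $\widetilde{\mathbb Q}^{*,s-r}_\varphi$ all with $\widetilde{\cal R}$ and $L_V(r,s)>0$ for $0<r<s$, it follows that $\widetilde{\mathbb Q}^{r,s}_\varphi=\widetilde{\cal R}$. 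Substituting both,
$$\widetilde{\mathbb Q}^t_{\varphi,\beta,\beta_{\rm carr}}\big(U\times(r,s]\times(0,r]\big)=\frac{2\zeta_2(rs-r^2)}{\zeta_2t^2}\,\widetilde{\cal R}(U)=\frac{2(rs-r^2)}{t^2}\,\widetilde{\cal R}(U),$$
which is the first assertion.

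Finally I would read off the density and the independence. Setting $U={\cal S}_+^2$ gives $\widetilde{\mathbb Q}^t_{\beta,\beta_{\rm carr}}\big((r,s]\times(0,r]\big)=2(rs-r^2)/t^2$; since $\int_0^{r}\int_{r}^{s}(2/t^2)\,ds'\,dr'=2r(s-r)/t^2=2(rs-r^2)/t^2$ for all $0<r<s<t$, and since the sets $(r,s]\times(0,r]$ generate, via differences, all Borel subsets of the triangle $\{(s,r):0<r<s<t\}$, the joint $(\beta,\beta_{\rm carr})$-density is $\widetilde p_{\beta,\beta_{\rm carr}}(s,r)=(2/t^2){\bf 1}\{0<r<s<t\}$, i.e. the uniform law on that triangle (it integrates to $1$). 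Moreover the displayed formula exhibits $\widetilde{\mathbb Q}^t_{\varphi,\beta,\beta_{\rm carr}}$ as the product $\widetilde{\cal R}\otimes\widetilde{\mathbb Q}^t_{\beta,\beta_{\rm carr}}$ on a generating $\pi$-system of product sets, hence on the whole product $\sigma$-field, so $\varphi$ and $(\beta,\beta_{\rm carr})$ are independent.

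The step I expect to be the main obstacle is the geometric identification in the first paragraph — translating the purely temporal condition ``carrying I-polygon born in $(0,r]$, segment born in $(r,s]$'' into the structural description of family (c). This requires following, through the iteration $Y(t)\overset{D}{=}Y(r)\boxplus Y(t-r)$, how the facets of a cell at an intermediate construction time lie inside the previously born I-polygons; once this correspondence and the consistency reduction to $Y(s)$ are secured, the remainder is routine bookkeeping with the intensities and directional distributions supplied by Lemma \ref{neueslemma} and the additivity (\ref{lengthmeasure}).
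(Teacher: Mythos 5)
Your proof is correct and follows essentially the same route as the paper: it rests on the additivity (\ref{lengthmeasure}) of the length measures, the identities of Lemma \ref{neueslemma}, and the resulting value $L_V(r,s)=2\zeta_2(rs-r^2)$, normalised by $L_V(t)=\zeta_2t^2$. The only differences are presentational --- you spell out the identification of the mark-set $\{\beta\in(r,s],\ \beta_{\rm carr}\in(0,r]\}$ with family (c), which the paper takes for granted, and you verify the density by direct integration over the triangle rather than by differentiating the distribution function as in (\ref{gleichungfuerqtbetabetacarr}).
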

\begin{proof} We use (\ref{lengthmeasure}), to conclude
\begin{equation}\label{lengthint}
L_V(s)=L_V(r) + L^*_V (s-r) + L_V (r,s)
\end{equation}
for the length intensities defined above. In view of Lemma \ref{neueslemma} we obtain from (\ref{lengthint}), \begin{equation}L_V (r,s) = 2\zeta_2 (rs-r^2).\label{gleichungfuerlvrs}\end{equation} From the definition of the length-weighted directional distribution it follows that the mean total length per unit volume of I-segments of $Y(t)$ with direction in a Borel set $U\subset{\cal S}_+^{2}$ is $L_V(t)\widetilde{{\cal R}}(U)$. Combining this with (\ref{lengthint}) and (\ref{gleichungfuerlvrs}) yields $$L_V(r,s)\widetilde{{\cal R}}(U) = 2\zeta_2(rs-r^2)\widetilde{{\cal R}}(U).$$ Thus, for the  mark distribution in a typical edge point we find
$$\widetilde{\mathbb Q}^t_{\varphi , \beta , \beta_{\rm carr}}( U \times (r,s] \times  (0,r])=\frac{L_V(r,s)\widetilde{{\cal R}}(U)}{L_V(t)} =\frac{2(rs-r^2)}{t^2}\widetilde{{\cal R}}(U)$$ and hence, $\varphi$ and $(\beta, \beta_{\rm carr})$ are independent. Since $\beta_{\rm carr} < \beta $ with probability 1 we have for the joint distribution function $\widetilde F_{\beta, \beta_{\rm carr}}(s,r)$ of $\beta$ and $\beta_{\rm carr}$ with $0<r<s<t$  the equation
\begin{equation}\label{gleichungfuerqtbetabetacarr}
\widetilde{\mathbb Q}^t_{ \beta , \beta_{\rm carr}}(  (r,s]\times (0,r] )= \widetilde F_{\beta, \beta_{\rm carr}} (s,r) - \widetilde{\mathbb Q}^t_{ \beta , \beta_{\rm carr}}( (0,r]\times (0,r] ),
\end{equation}
where $\widetilde{\mathbb Q}^t_{ \beta , \beta_{\rm carr}}((r,s]\times(0,r])=\widetilde{{\Bbb Q}}^t_{\varphi,\beta,\beta_{\rm carr}}({\cal S}_+^2\times(r,s]\times(0,r])$ stands for joint distribution of $(\beta,\beta_{\rm carr})$ in a typical edge point of $Y(t)$. Partial differentiation with respect to $r$ and then with respect to $s$ yields $\widetilde p_{ \beta, \beta_{\rm carr}} (s,r)$ (note that the $\widetilde{\Bbb Q}$-term on the right hand side of (\ref{gleichungfuerqtbetabetacarr}) does not depend on $s$ and thus vanishes after differentiation w.r.t. $s$) and completes the proof of the lemma.\hfill $\Box$
\end{proof}
We turn now to the joint length-weighted distribution $\widetilde{\mathbb Q}^t_{\ell, \varphi , \beta , \beta_{\rm carr}}$ of length, direction and birth times of the I-segment, i.e. of the I-segment through a typical edge point of $Y(t)$. The key is the conditional length distribution of the I-segment, given its direction $u \in {\cal S}_+^{2}$. For any point $z$ in the edge-skeleton of $Y(t)$ denote by $\ell (z), \varphi (z), \beta (z) , \beta_{\rm carr} (z)$ the a.s. uniquely determined mark of the I-segment through $z$. According to the definition of the mark distribution (see p.84 in \cite{s/w}) we have for any Borel set $U\subset {\cal S}_+^{2}$ using the Campbell theorem and (\ref{lengthmeasure})
\begin{eqnarray}
\nonumber & & \widetilde{\mathbb Q}^t_{\ell, \varphi , \beta , \beta_{\rm carr}} ( (x,\infty )\times  U \times (r,s] \times  (0,r]) \\
\nonumber &=& \frac{1}{L_V(t)}  {\mathbb E} \int\limits  {\bf 1}\{ \ell (z)>x, \varphi (z)\in U \}   \mu (r,s,dz)\\
\nonumber &=& \frac{1}{L_V(t)} {\mathbb E} \left[ \int\limits  {\bf 1}\{ \ell (z)>x, \varphi (z)\in U \}   \mu (s,dz)
                                        - \int\limits  {\bf 1}\{ \ell (z)>x, \varphi (z)\in U \}   \mu (r,dz)\right.\\
\nonumber & & \hspace{5cm} \left. -\int\limits  {\bf 1}\{ \ell (z)>x, \varphi (z)\in U \}   \mu^* (s-r,dz)\right] \\
\nonumber &=& \frac{1}{L_V(t)} \left[ L_V(s) \widetilde{\mathbb Q}^s_{\ell, \varphi } ( (x,\infty )\times  U)
                            - L_V(r) \widetilde{\mathbb Q}^r_{\ell, \varphi } ( (x,\infty )\times  U)\right.\\
& & \hspace{5cm} \left.- L_V(s-r) \widetilde{\mathbb Q}^{*,s-r}_{\ell, \varphi } ( (x,\infty )\times  U)\right]\label{laengwicht1},
\end{eqnarray}
where $\widetilde{{\mathbb Q}}^{*,s-r}_{\ell, \varphi }$ is the $(\ell,\varphi)$-marginal distribution of $\widetilde{\mathbb Q}^{*,s-r}_{\ell, \varphi , \beta , \beta_{\rm carr}}$. Write now $\widetilde{\mathbb Q}^s_{\ell | \varphi =u }$ for the length-weighted conditional distribution of length under the condition $\varphi =u$. Then
\begin{eqnarray}
\nonumber \widetilde{\mathbb Q}^s_{\ell, \varphi } ( (x,\infty )\times  U) &=&  \int\limits {\bf 1}\{l>x, u\in U \} \widetilde{\mathbb Q}^s_{\ell, \varphi } (d(l, u )) \\
 &=&  \int\limits \int\limits {\bf 1}\{ l>x \} \widetilde{\mathbb Q}^s_{\ell | \varphi =u } (dl) {\bf 1}\{ u\in U \} \widetilde{{\cal R}}(du),\label{conditionallength}
\end{eqnarray}
where we have used Lemma \ref{neueslemma}. In view of (\ref{laengwicht1}) and (\ref{conditionallength}) we now have to determine certain conditional length distributions.
\begin{lemma}\label{lemmaterm12}
For $0<r<s$, $0<x$ and a Borel set $U\subset{\cal S}_+^{2}$ we have 
\begin{eqnarray}
\nonumber  & & L_V(s) \widetilde{\mathbb Q}^s_{\ell, \varphi } ( (x,\infty )\times  U) - L_V(r) \widetilde{\mathbb Q}^r_{\ell, \varphi } ( (x,\infty )\times  U) \\
\nonumber  & = & \zeta_2\int\limits_U{2\over\Lambda([u])^2x^2} \left( [3+\Lambda([u])rx(3+\Lambda([u])rx)]e^{-\Lambda([u])rx} \right. \\
\nonumber  & & \left. \qquad \ -[3+\Lambda([u])sx(3+\Lambda([u])sx)]e^{-\Lambda([u])sx}\right)\widetilde{{\cal R}}(du).
\end{eqnarray}
\end{lemma}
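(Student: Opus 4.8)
The plan is to reduce the asserted identity to a single one–variable integration, using the explicit form of the length‑weighted conditional length distribution of the typical I‑segment established earlier. First I would observe that the random set (a), the union of all I‑segments of $Y(s)$, coincides with the edge‑skeleton of $Y(s)$ up to a set of vanishing length (its vertices), and that every point of it lies on a unique I‑segment. Hence $\widetilde{\mathbb Q}^s_{\ell,\varphi}$ is exactly the length‑weighted joint law of length and direction of the typical I‑segment of $Y(s)$, and the conditional survival function of length given $\varphi=u$ is the function $\widetilde{H}^s_{\ell|\varphi=u}$ supplied by Lemma \ref{lemsurvivalfunctions}. Inserting this, together with $L_V(s)=\zeta_2 s^2$ from Lemma \ref{neueslemma}, into the disintegration (\ref{conditionallength}), I obtain
$$L_V(s)\,\widetilde{\mathbb Q}^s_{\ell,\varphi}((x,\infty)\times U)=\zeta_2 s^2\int\limits_U\widetilde{H}^s_{\ell|\varphi=u}(x)\,\widetilde{{\cal R}}(du)=\zeta_2\int\limits_U\int\limits_0^s 2\sigma\,(1+\Lambda([u])\sigma x)\,e^{-\Lambda([u])\sigma x}\,d\sigma\,\widetilde{{\cal R}}(du),$$
and the analogous expression with $r$ in place of $s$. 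Subtracting the two, the prefactors $\zeta_2 s^2$ and $\zeta_2 r^2$ absorb the $s^{-2}$ and $r^{-2}$ sitting inside $\widetilde{H}$, and the integrals $\int_0^s$ and $\int_0^r$ combine into $\int_r^s$, giving
$$L_V(s)\widetilde{\mathbb Q}^s_{\ell,\varphi}((x,\infty)\times U)-L_V(r)\widetilde{\mathbb Q}^r_{\ell,\varphi}((x,\infty)\times U)=\zeta_2\int\limits_U\int\limits_r^s 2\sigma\,(1+\Lambda([u])\sigma x)\,e^{-\Lambda([u])\sigma x}\,d\sigma\,\widetilde{{\cal R}}(du).$$

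It then remains to evaluate the inner integral. Writing $a:=\Lambda([u])x>0$, an elementary integration by parts shows that $\sigma\mapsto-\tfrac{2}{a^2}\,e^{-a\sigma}\,(a^2\sigma^2+3a\sigma+3)$ is an antiderivative of $\sigma\mapsto 2\sigma(1+a\sigma)e^{-a\sigma}$; a one‑line differentiation confirms this. Evaluating between $\sigma=r$ and $\sigma=s$ and rewriting $a^2\sigma^2+3a\sigma+3=3+a\sigma(3+a\sigma)$ yields
$$\int\limits_r^s 2\sigma(1+a\sigma)e^{-a\sigma}\,d\sigma=\frac{2}{a^2}\Big([3+ar(3+ar)]\,e^{-ar}-[3+as(3+as)]\,e^{-as}\Big),$$
and substituting back $a=\Lambda([u])x$ into the previous display reproduces precisely the formula in the statement.

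I do not expect a genuine obstacle: the argument is a short computation. The only two places that call for care are the identification of $\widetilde{\mathbb Q}^s_{\ell|\varphi=u}$ with the law whose survival function is $\widetilde{H}^s_{\ell|\varphi=u}$ (so that Lemma \ref{lemsurvivalfunctions} may be applied), and the bookkeeping of the constants when the difference of the two terms is formed, so that the $s^2$ and $r^2$ factors cancel correctly against $\widetilde{H}$.
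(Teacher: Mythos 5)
Your proposal is correct and follows essentially the same route as the paper: insert $L_V(s)=\zeta_2 s^2$ from Lemma \ref{neueslemma} and the survival function $\widetilde{H}^{s}_{\ell|\varphi=u}$ from Lemma \ref{lemsurvivalfunctions} into the disintegration (\ref{conditionallength}), combine the two integrals into a single integral over $(r,s)$, and integrate explicitly. The antiderivative you exhibit is the right one, so nothing is missing.
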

\begin{proof}
We start by recalling from Lemma \ref{neueslemma} that $L_V(s)=\zeta_2s^2$ and $L_V(r)=\zeta_2r^2$. Using now (\ref{conditionallength}) and the integral representation for the conditional survival function provided in Lemma \ref{lemsurvivalfunctions} we calculate
\begin{eqnarray}
\nonumber & & L_V(s) \widetilde{\mathbb Q}^s_{\ell, \varphi } ( (x,\infty )\times  U) - L_V(r) \widetilde{\mathbb Q}^r_{\ell, \varphi } ( (x,\infty )\times  U)\\
\nonumber &=& \zeta_2s^2\int\limits_U\widetilde{H}_{\ell|\varphi=u}^{s}(x) \widetilde{{\cal R}}(du)-\zeta_2r^2\int\limits_U\widetilde{H}_{\ell|\varphi=u}^{r}(x) \widetilde{{\cal R}}(du)\\
\nonumber &=& 2\zeta_2\int\limits_U\int\limits_r^sv(1+\Lambda([u])vx)e^{-\Lambda([u])vx}dv\widetilde{{\cal R}}(du)
\end{eqnarray}
and integration proves the claim.\hfill $\Box$
\end{proof}
It remains to determine the last item $\widetilde{\mathbb Q}^{*,s-r}_{\ell, \varphi } ( (x,\infty )\times  U)$ in (\ref{laengwicht1}), which in view of Lemma \ref{neueslemma} may be written in the form \begin{equation}\widetilde{{\Bbb Q}}_{\ell,\varphi}^{*,s-r}((x,\infty)\times U)=\int\limits\int\limits{\bf 1}\{l>x\}\widetilde{{\Bbb Q}}_{\ell|\varphi=u}^{*,s-r}(dl){\bf 1}\{u\in U\}\widetilde{R}(du).\label{eqconditiondistr}\end{equation}
\begin{lemma}\label{lemmaterm3} For $0<r<s$, $0<x$ and a Borel set $U\subset{\cal S}_+^{2}$ we have
$$L_V(s-r)\widetilde{\mathbb Q}^{*,s-r}_{\ell,\varphi}((x,\infty )\times U)=\zeta_2\int\limits_U{2\over\Lambda([u])^2x^2}$$ $$\times\left(3(e^{-\Lambda([u])rx}-e^{-\Lambda([u])sx})+\Lambda([u])^2sx^2(re^{-\Lambda([u])rx}-se^{-\Lambda([u])sx})\right)\widetilde{{\cal R}}(du).$$
\end{lemma}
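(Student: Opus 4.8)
The plan is to compute $L_V(s-r)\widetilde{\mathbb Q}^{*,s-r}_{\ell,\varphi}((x,\infty)\times U)$ directly from the structure already in place, using \textbf{[Iteration stability]} together with Lemma \ref{neueslemma}. By (\ref{BOXPLUS}) and the discussion preceding Lemma \ref{neueslemma}, the random set described in item (b) --- the I-segments of $Y(s)$ with birth time in $(r,s]$ lying in the interior of the cells of $Y(r)$ --- has the same length-intensity and length-weighted mark distribution as the full I-segment system of $Y(s-r)$. Hence $L_V(s-r)=\zeta_2(s-r)^2$ by the formula $L_V(\cdot)=\zeta_2(\cdot)^2$ in Lemma \ref{neueslemma}, and the conditional length survival function $\widetilde H^{*,s-r}_{\ell|\varphi=u}$ appearing in (\ref{eqconditiondistr}) equals the conditional length survival function of the length-weighted typical edge in the state $Y(s-r)$, which is $\widetilde H^{s-r}_{\ell|\varphi=u}$. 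The latter is given explicitly by the integral representation in Lemma \ref{lemsurvivalfunctions}, namely $\widetilde H^{s-r}_{\ell|\varphi=u}(x)=\int_0^{s-r}\frac{2v}{(s-r)^2}(1+\Lambda([u])vx)e^{-\Lambda([u])vx}\,dv$.

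The next step is to assemble these pieces. Using (\ref{eqconditiondistr}) with $\widetilde R=\widetilde{\cal R}$ (again by Lemma \ref{neueslemma}) we get
\begin{eqnarray}
\nonumber L_V(s-r)\widetilde{\mathbb Q}^{*,s-r}_{\ell,\varphi}((x,\infty)\times U)
&=&\zeta_2(s-r)^2\int\limits_U\widetilde H^{s-r}_{\ell|\varphi=u}(x)\,\widetilde{\cal R}(du)\\
\nonumber &=&2\zeta_2\int\limits_U\int\limits_0^{s-r}v(1+\Lambda([u])vx)e^{-\Lambda([u])vx}\,dv\,\widetilde{\cal R}(du).
\end{eqnarray}
Alternatively, and perhaps cleaner for matching the stated form, I would use the length-measure decomposition (\ref{lengthmeasure}) together with the displays (\ref{laengwicht1}) and Lemma \ref{lemmaterm12}: since the three terms on the right of (\ref{laengwicht1}) must sum correctly, $L_V(s-r)\widetilde{\mathbb Q}^{*,s-r}_{\ell,\varphi}((x,\infty)\times U)$ equals the difference computed in Lemma \ref{lemmaterm12} \emph{minus} the left-hand side of (\ref{laengwicht1}) times $L_V(t)$; but actually the most economical route is simply to observe that by \textbf{[Iteration stability]} the quantity in question is obtained from Lemma \ref{lemmaterm12} by the substitution $(r,s)\mapsto(0,s-r)$ --- i.e. it equals $L_V(s-r)\widetilde{\mathbb Q}^{s-r}_{\ell,\varphi}((x,\infty)\times U)-L_V(0)\widetilde{\mathbb Q}^0_{\ell,\varphi}((x,\infty)\times U)$, and the second term vanishes. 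That gives
$$\zeta_2\int\limits_U\frac{2}{\Lambda([u])^2x^2}\Big(3-[3+\Lambda([u])(s-r)x(3+\Lambda([u])(s-r)x)]e^{-\Lambda([u])(s-r)x}\Big)\widetilde{\cal R}(du).$$

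The remaining work is the elementary evaluation of the integral $\int_0^{s-r}v(1+\Lambda([u])vx)e^{-\Lambda([u])vx}\,dv$ by integration by parts (writing $a=\Lambda([u])x$, one integrates $ve^{-av}$ and $v^2e^{-av}$), and then an algebraic rearrangement to bring the result into the exact form stated in the lemma, where the exponentials carry $e^{-\Lambda([u])rx}$ and $e^{-\Lambda([u])sx}$ rather than $e^{-\Lambda([u])(s-r)x}$. This rearrangement is the only place where care is needed: one must check that $3-[3+a(s-r)x(3+\dots)]e^{-a(s-r)x}$, after multiplying numerator and denominator appropriately, reproduces $3(e^{-arx}-e^{-asx})+a^2sx^2(re^{-arx}-se^{-asx})$ up to the common factor --- wait, this does not match unless one also accounts for the fact that the decomposition (\ref{lengthmeasure}) involves the \emph{carrying} I-polygon being a facet of a $Y(r)$-cell, so the correct identity is the one coming from subtracting the Lemma \ref{lemmaterm12} expression and the interior term from $L_V(s)\widetilde{\mathbb Q}^s_{\ell,\varphi}$. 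Concretely: from (\ref{lengthmeasure}), $L_V(s)\widetilde{\mathbb Q}^s_{\ell,\varphi}=L_V(r)\widetilde{\mathbb Q}^r_{\ell,\varphi}+L_V(r,s)\widetilde{\mathbb Q}^{r,s}_{\ell,\varphi}+L_V(s-r)\widetilde{\mathbb Q}^{*,s-r}_{\ell,\varphi}$, and one isolates the last term. I expect the main obstacle to be precisely this bookkeeping --- keeping straight which of the three sub-collections (a),(b),(c) each $\widetilde{\mathbb Q}$ refers to and getting the signs and the $r\leftrightarrow s$ roles right --- rather than any analytic difficulty; once the correct combination of known quantities is identified, the computation collapses to the stated closed form.
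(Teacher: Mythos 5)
The central step of your argument --- the identification $\widetilde H^{*,s-r}_{\ell|\varphi=u}=\widetilde H^{s-r}_{\ell|\varphi=u}$ --- is false, and this is precisely the point where the spatial case is genuinely harder. Lemma \ref{neueslemma} asserts only that the \emph{length intensity} and the \emph{directional distribution} of the collection (b) agree with those of $Y(s-r)$; it says nothing about the length distribution of its I-segments. By (\ref{BOXPLUS}) the segments of collection (b) are the I-segments of independent copies $Y_c(s-r)$ \emph{chopped off by the boundaries of the cells $c$ of the frame tessellation $Y(r)$}, so they are stochastically shorter than the I-segments of $Y(s-r)$. Quantitatively, the paper computes the conditional mean length of the typical segment of collection (b) to be $3/(\Lambda([u])(2s+r))$, whereas for $Y(s-r)$ it is $3/(2\Lambda([u])(s-r))$; these agree only for $r=0$. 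A quick sanity check exposes the error: your candidate answer depends on $(r,s)$ only through $s-r$, while the statement of the lemma (and the correct answer) depends on $r$ and $s$ separately --- the larger $r$ is, the finer the frame $Y(r)$ chops the nested segments. The same objection applies to your ``most economical route'' via the substitution $(r,s)\mapsto(0,s-r)$ in Lemma \ref{lemmaterm12}: that produces $L_V(s-r)\widetilde{\mathbb Q}^{s-r}_{\ell,\varphi}$, the \emph{untruncated} quantity, not $L_V(s-r)\widetilde{\mathbb Q}^{*,s-r}_{\ell,\varphi}$.

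Your fallback --- isolating the last term of the decomposition $L_V(s)\widetilde{\mathbb Q}^s_{\ell,\varphi}=L_V(r)\widetilde{\mathbb Q}^r_{\ell,\varphi}+L_V(r,s)\widetilde{\mathbb Q}^{r,s}_{\ell,\varphi}+L_V(s-r)\widetilde{\mathbb Q}^{*,s-r}_{\ell,\varphi}$ --- is circular: the middle term is exactly the unknown that the whole section is built to determine (it is, up to normalisation, $\widetilde{\mathbb Q}^t_{\ell,\varphi,\beta,\beta_{\rm carr}}((x,\infty)\times U\times(r,s]\times(0,r])$ in (\ref{laengwicht1})), and Lemma \ref{lemmaterm3} is needed as an \emph{independent} input to that identity, not a consequence of it. The missing idea is the remaining-length argument: for a typical edge point of collection (b), the part of its I-segment lying above the point in direction $u$ is the minimum of the remaining length within the independent copy of $Y(s-r)$ and the distance to the frame $Y(r)$, which by \textbf{[Linear sections]} is exponential with parameter $\Lambda([u])r$; hence $\widetilde{G}^{*,s-r}_{\ell|\varphi=u}(x)=\widetilde{G}^{s-r}_{\ell|\varphi=u}(x)\,e^{-\Lambda([u])rx}$ as in (\ref{remainingsegmentsurvivaldef}). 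One must then invert the Palm relation (\ref{eqpalm2}) to pass from the remaining-length law back to the length law of the whole typical segment of collection (b), and finally length-weight that. Without this truncation-plus-Palm-inversion step the lemma is not reachable from the ingredients you invoke.
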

\begin{proof} To derive the result we make use of a method developed for the planar case in \cite{erev} and consider the conditional survival function $\widetilde{G}^{*,s-r}_{\ell|\varphi =u}$ of the length of the {\em remaining} I-segment, this is the part of the I-segment that lies {\em above} a random point of the edge skeleton, which is selected according to $\mu^* (s-r,\cdot)$. (In this proof, the letter $G$ will always refer to a remaining I-segment while $H$ refers to the whole I-segment.) Regarding (\ref{BOXPLUS}), the distribution of remaining length is the distribution of the minimum of the remaining length in $Y(s-r)$ and the distance (in direction $u$) to the `frame' tessellation $Y(r)$, where the I-segment is cut. Because of \textbf{[Linear sections]} the latter has the conditional survival function $e^{-\Lambda([u])rx}$. Making use of the independence of $Y(r)$ and $Y(s-r)$ we obtain
\begin{equation}\label{remainingsegmentsurvivaldef}
\widetilde{G}^{*,s-r}_{\ell | \varphi =u }(x) = \widetilde{G}^{s-r}_{\ell | \varphi =u }(x)\cdot e^{-\Lambda([u])rx},
\end{equation}
where $\widetilde{G}^{s-r}_{\ell | \varphi =u }$ is the corresponding conditional survival function for the remaining I-segment of the tessellation $Y(s-r)$. Palm theory yields (compare with Eq. (14) in \cite{erev}) that the survival function of the length of the remaining I-segment as considered above and the conditional survival function $H_{\ell|\varphi=u}^{s-r}$ of the length of the (whole) typical I-segment are related by
\begin{equation}\label{eqpalm2}
\widetilde{G}^{s-r}_{\ell | \varphi =u }(x)= {1\over\bar{\ell}^{s-r}_{\varphi =u}}\int\limits_x^\infty H_{\ell|\varphi=u}^{s-r}(a)da,
\end{equation}
where $\bar{\ell}^{s-r}_{\varphi =u}$ is the conditional mean length of the typical I-segment in $Y(s-r)$. Using Lemma \ref{lemsurvivalfunctions} we calculate $\bar{\ell}^{s-r}_{\varphi=u}={3\over 2\Lambda([u])(s-r)}$. Combining (\ref{remainingsegmentsurvivaldef}) with (\ref{eqpalm2}) and using again Lemma \ref{lemsurvivalfunctions} we find
\begin{equation}\widetilde{G}^{*,s-r}_{\ell | \varphi =u }(x)={2\Lambda([u])\over(s-r)^2}e^{-\Lambda([u])rx}\int\limits_x^\infty\int\limits_0^{s-r}v^2e^{-\Lambda([u])va}dvda.\label{eqpalm3}\end{equation} We use now once more the Palm theory developed in \cite{erev} (see in particular Eq. (16) there) to conclude that the survival function $H_{\ell|\varphi=u}^{*,s-r}$ of conditional length distribution of the corresponding typical I-segment equals 
$$H_{\ell|\varphi=u}^{*,s-r}(x) = \left(\lim_{x\downarrow 0}{\partial\widetilde{G}^{*,s-r}_{\ell | \varphi =u }(x)\over\partial x}\right)^{-1}{\partial\widetilde{G}^{*,s-r}_{\ell | \varphi =u }(x)\over\partial x}$$ 
and thus we find 
$$\hspace{-2cm}H_{\ell|\varphi=u}^{*,s-r}(x)={6\over\Lambda([u])^3(s-r)^2(2s+r)x^3}\left((2+\Lambda([u])rx)e^{-\Lambda([u])rx}\right.$$ $$\hspace{4cm}\left.-(2+(2s-r)\Lambda([u])x+\Lambda([u])^2s(s-r)x^2)e^{-\Lambda([u])sx}\right)$$
from (\ref{eqpalm3}) by integration. Using this formula we calculate the corresponding mean I-segment length $\bar\ell_{\varphi=u}^{*,s-r}$ as 
$$\bar\ell_{\varphi=u}^{*,s-r}=\int\limits_0^\infty H_{\ell|\varphi=u}^{*,s-r}(x)dx={3\over\Lambda([u])(2s+r)}$$ 
and again by length-weighting we get the corresponding survival function 
$$\widetilde{H}_{\ell|\varphi=u}^{*,s-r}(x)=\widetilde{\mathbb Q}^{*,s-r}_{\ell|\varphi=u}((x,\infty ))=\int\limits_x^\infty{z\over\bar\ell_{\varphi=u}^{*,s-r}}{\partial H_{\ell|\varphi=u}^{*,s-r}(z)\over\partial z}dz,$$ 
which is given by 
$$\hspace{-2cm}\widetilde{H}_{\ell|\varphi=u}^{*,s-r}(x)={2\over\Lambda([u])^2(s-r)^2x^2}\left(3(e^{-\Lambda([u])rx}-e^{-\Lambda([u])sx})+\right.$$ $$\hspace{5cm}\left.\Lambda([u])^2sx^2(re^{-\Lambda([u])rx}-se^{-\Lambda([u])sx})\right).$$ 
Taking into account the  factor $L_V(s-r)=\zeta_2(s-r)^2$, we can complete the proof by using (\ref{eqconditiondistr}).\hfill $\Box$
\end{proof}
Combining now (\ref{laengwicht1}) with Lemma \ref{lemmaterm12}, Lemma \ref{lemmaterm3} and the fact $L_V(t)=\zeta_2t^2$ from Lemma \ref{neueslemma} we arrive at 
$$\widetilde{\mathbb Q}^t_{\ell, \varphi , \beta , \beta_{\rm carr}} ( (x,\infty )\times  U \times (r,s] \times  (0,r])$$ $$=\int\limits_U{2r\over\Lambda([u])xt^2}\left((2+\Lambda([u])rx)e^{-\Lambda([u])rx}-(2+\Lambda([u])sx)e^{-\Lambda([u])sx}\right)\widetilde{{\cal R}}(du) .$$
A relation similar to (\ref{gleichungfuerqtbetabetacarr}) for $\widetilde{\mathbb Q}^t_{\ell, \varphi , \beta , \beta_{\rm carr}}$ and the respective distribution function yields after differentiation w.r.t. $r$, $s$ and $x$ the following assertion:
\begin{lemma}\label{neueskorollar}
The conditional joint density of length $\ell$ and the birth time vector $(\beta,\beta_{\rm carr})$ of the length-weighted typical I-segment in $Y(t)$, given its direction $\varphi=u\in{\cal S}_+^2$, equals $$\widetilde{p}_{\ell,\beta,\beta_{\rm carr}|\varphi=u}^t(x,s,r)={2\Lambda([u])^2s^2\over t^2}xe^{-\Lambda([u])sx}\, {\bf 1}\{ x>0\} {\bf 1}\{0<r<s<t\} .$$ 
\end{lemma}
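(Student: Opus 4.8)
The plan is to extract Lemma~\ref{neueskorollar} from the cumulative identity
$$\widetilde{\mathbb Q}^t_{\ell, \varphi , \beta , \beta_{\rm carr}} ( (x,\infty )\times  U \times (r,s] \times  (0,r])=\int\limits_U{2r\over\Lambda([u])xt^2}\left((2+\Lambda([u])rx)e^{-\Lambda([u])rx}-(2+\Lambda([u])sx)e^{-\Lambda([u])sx}\right)\widetilde{{\cal R}}(du)$$
displayed just above the statement, by differentiating in the three continuous parameters. First I would pass from the ``survival-type'' set $(x,\infty)\times U\times(r,s]\times(0,r]$ to the corresponding conditional distribution function. Since $\widetilde{\cal R}$ is already a probability measure factoring out of the left-hand side (the $\varphi$-marginal of $\widetilde{\mathbb Q}^t$ is $\widetilde{\cal R}$ by Lemma~\ref{neueslemma}), it suffices to work with the scalar kernel
$$F(x,s,r) := {2r\over\Lambda([u])xt^2}\left((2+\Lambda([u])rx)e^{-\Lambda([u])rx}-(2+\Lambda([u])sx)e^{-\Lambda([u])sx}\right),$$
which equals $\widetilde{\mathbb Q}^t_{\ell,\beta,\beta_{\rm carr}\mid\varphi=u}((x,\infty)\times(r,s]\times(0,r])$, and then recover the density as the mixed partial derivative of the appropriate distribution function. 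Concretely, as in the derivation of Lemma~\ref{lengthweibirthdir} via~(\ref{gleichungfuerqtbetabetacarr}), write $F$ as a difference $\widetilde F(x,s,r)-\widetilde F(x,r,r)$ where $\widetilde F$ refers to the joint distribution function of $(\ell^{-\text{complement}},\beta,\beta_{\rm carr})$; the second summand is independent of $s$, so after $\partial/\partial s$ it drops out, exactly as in the earlier proof.

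The key steps are then purely computational: take $-\partial/\partial x$ of $F$ (the minus sign because $(x,\infty)$ is an upper tail in the length variable), then $\partial/\partial s$, then $\partial/\partial r$ — in whichever order is most convenient, since $F$ is smooth in all three variables on $\{0<r<s<t,\ x>0\}$. Differentiating $(2+\Lambda([u])sx)e^{-\Lambda([u])sx}$ in $s$ already produces the factor $-\Lambda([u])^2sx^2e^{-\Lambda([u])sx}$, after which differentiating the prefactor $2r/(\Lambda([u])xt^2)$ in $r$ gives $2/(\Lambda([u])xt^2)$, and the final $-\partial/\partial x$ combines these into $2\Lambda([u])^2 s^2 x\,e^{-\Lambda([u])sx}/t^2$. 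One should check the order-of-differentiation bookkeeping carefully: the term coming from $(2+\Lambda([u])rx)e^{-\Lambda([u])rx}$ contributes to the diagonal part $\widetilde F(x,r,r)$ and must be handled consistently with the $s$-independent term that vanishes under $\partial/\partial s$, so that no spurious contribution survives. The appearance of the indicator ${\bf 1}\{0<r<s<t\}$ is just the region on which the cumulative identity was established, and ${\bf 1}\{x>0\}$ is the support of the length.

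The main obstacle is not analytic difficulty but sign- and variable-tracking discipline: one is differentiating a tail-cumulative functional that is decreasing in $x$ and in $r$ (for fixed $r<s$ the relevant orientation of the birth-time rectangle matters) and increasing in $s$, so each of the three differentiations carries a sign that must be reconciled with the convention that a density is non-negative. I would resolve this exactly as the authors did for $\widetilde p_{\beta,\beta_{\rm carr}}$ in Lemma~\ref{lengthweibirthdir} — reduce to an honest distribution function by subtracting the $(0,r]\times(0,r]$ corner, note that corner is $s$-free, differentiate, and verify at the end that $\int_0^\infty\!\!\int_0^t\!\!\int_0^s \widetilde p^t_{\ell,\beta,\beta_{\rm carr}\mid\varphi=u}(x,s,r)\,dr\,ds\,dx=1$, which is a quick sanity check ($\int_0^s\!dr=s$, then $\int_0^\infty 2\Lambda([u])^2 s^3 x\,e^{-\Lambda([u])sx}\,dx/t^2=2s/t^2$, then $\int_0^t 2s\,ds/t^2=1$) confirming both the normalisation and that no factor has been dropped.
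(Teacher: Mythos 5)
Your proposal is correct and follows the paper's own (one-line) argument exactly: the paper likewise derives the lemma by forming a relation analogous to (\ref{gleichungfuerqtbetabetacarr}) for the displayed cumulative identity and then differentiating with respect to $r$, $s$ and $x$, with the $(0,r]\times(0,r]$ corner term dropping out under $\partial/\partial s$. One intermediate factor in your sketch is misstated --- $\partial_s\bigl[(2+\Lambda([u])sx)e^{-\Lambda([u])sx}\bigr]=-\Lambda([u])x(1+\Lambda([u])sx)e^{-\Lambda([u])sx}$, not $-\Lambda([u])^2sx^2e^{-\Lambda([u])sx}$ --- but the mixed partial $-\partial_x\partial_r\partial_s$ of the kernel does yield the claimed density, as your normalisation check confirms.
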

We turn now to the proofs of our main results. In fact, Lemma \ref{neueskorollar} is the key for the proof of Theorem \ref{thm}, which forms the basis to derive Theorems \ref{thm3} and \ref{thm2}.

\subsection{Proof of the main results}\label{secprooftheorem}

\begin{proof}[Proof of Theorem \ref{thm}]  \textit{(i)} Apply Lemma \ref{lemdistributiontypicalsegment} with $f(L)={\bf 1}\{ \varphi (L)\in U\}$ for a measurable $U\subset{\cal S}_+^2$ and where $\varphi (L)$ the direction of $L$. Since for homogeneous Poisson plane tessellations with intensity measure $s\Lambda$, $s>0$, the directional distribution of the typical edge is invariant w.r.t. the scaling factor $s$, this implies that the distribution of the direction of the typical I-segment of $Y(t)$ is the same as the directional distribution of the typical edge in a Poisson plane tessellation with intensity measure $t\Lambda$. Thus we can apply Theorem 1 in \cite{HS3}, which immediately yields assertion (i).

\textit{(ii)} The key is the relation 
$$p_{\ell,\beta,\beta_{\rm carr}|\varphi=u}(x,s,r)=\frac{\bar{\ell}^{t}_{\varphi =u}}{x}\widetilde{p}_{\ell,\beta,\beta_{\rm carr}|\varphi=u}(x,s,r),$$ 
where $\bar{\ell}^{t}_{\varphi =u}$ is the conditional mean length of the typical I-segment in $Y(t)$, given $\varphi=u$. With Lemma \ref{lemsurvivalfunctions} we calculate $\bar{\ell}^{t}_{\varphi =u}={3\over 2t\Lambda([u])}$, and Lemma \ref{neueskorollar} implies 
\begin{eqnarray}
\nonumber p_{\ell,\beta,\beta_{\rm carr}|\varphi=u}(x,s,r) &=& {3\over 2\Lambda([u])tx}{2\Lambda([u])^2s^2\over t^2}xe^{-\Lambda([u])x} \, {\bf 1}\{ x>0\}\\
&=& {3s\over t^3}\Lambda([u])se^{-\Lambda([u])sx} \, {\bf 1}\{ x>0\}.\label{neueslabel1}
\end{eqnarray}
Integration with respect to $x$ yields now 
\begin{equation}\label{neueslabel2}
p_{\beta,\beta_{\rm carr}|\varphi=u}(s,r)={3s\over t^3}\int\limits_0^\infty\Lambda([u])se^{-\Lambda([u])sx}dx={3s\over t^3},\ \ \ 0<r<s<t.
\end{equation}
Since it does not depend on $u$, we have $p_{\beta,\beta_{\rm carr}}=p_{\beta,\beta_{\rm carr}|\varphi=u}$ and this completes the argument.

\textit{(iii)} 
Similarly to the proof of part (ii) above, we use the following relation for the conditional densities:
$$p_{\ell |\varphi=u,\beta=s,\beta_{\rm carr}=r}(x)=\frac{\bar{\ell}^{t}_{\varphi =u,\beta=s,\beta_{\rm carr}=r}}{x}\widetilde{p}_{\ell |\varphi=u,\beta=s,\beta_{\rm carr}=r}(x),$$ where $\bar{\ell}^{t}_{\varphi =u,\beta=s,\beta_{\rm carr}=r}$ is the conditional mean length of the typical I-segment in $Y(t)$, given $\varphi$, $\beta$ and $\beta_{\rm carr}$. Combining (\ref{neueslabel1}) and (\ref{neueslabel2}) we calculate $$p_{\ell|\varphi=u,\beta=s,\beta_{\rm carr}=r}(x)={p_{\ell,\beta,\beta_{\rm carr}|\varphi=u}(x,s,r)\over p_{\beta,\beta_{\rm carr}|\varphi=u}(s,r)}=\Lambda([u])se^{-\Lambda([u])sx}{\bf 1}\{x>0\},$$ which implies $\bar{\ell}^{t}_{\varphi =u,\beta=s,\beta_{\rm carr}=r}={1\over\Lambda([u])s}$. This completes the argument.\hfill $\Box$
\end{proof}

\begin{proof}[Proof of Corollaries \ref{cor1}--\ref{cor3}]
The formulas in Corollary \ref{cor1} follow by straightforward integration from Theorem \ref{thm} (ii).

To see Corollary \ref{cor2} we first note that in the proof of Theorem \ref{thm} (ii) we have already shown that 
$$p_{\ell,\beta,\beta_{\rm carr}|\varphi=u}(x,s,r)={3s\over t^3}\Lambda([u])se^{-\Lambda([u])sx},$$ 
which proves the formula. Integration with respect to $x$ yields now the conditional joint density of the birth time vector, namely $$p_{\beta,\beta_{\rm carr}|\varphi=u}(s,r)={3s\over t^3}.$$ This shows the independence of $\varphi$ and the birth times $(\beta,\beta_{\rm carr})$ and finally completes the proof of Corollary \ref{cor2}.

The statement of Corollary \ref{cor3} follows by  integration from the formula (\ref{neueslabel1}).\hfill $\Box$
\end{proof}

\begin{proof}[Proof of Theorem \ref{thm3}]
The first step is to verify the following expression for ${\mathsf p}_n$:
\begin{equation}\label{helpeq}
\begin{split}
{\mathsf p}_n= &\int\limits_0^{t}\int\limits_0^s\int\limits_{{\cal S}_+^2}\int\limits_0^\infty{3\Lambda([u])s^2\over t^3} e^{-s\Lambda([u])x}\\
&\qquad\times{(\Lambda([u])x(3t-2s-r))^n\over n!}e^{-\Lambda([u])x(3t-2s-r)}dx{\cal R}_{\rm typ}(du)drds.
\end{split}
\end{equation}
We start by noting that an I-segment arises as the intersection of two I-polygons, which are born at two different time instants, the maximum of which is the birth time of the I-segment and has a density $p_{\beta}(s)$ given by (\ref{margd}). However, in contrast to the planar case, I-segments in a spatial STIT tessellation can already have vertices in their relative interior at their time of birth. These vertices are generated by extant I-segments 'on the backside' of the carrying polygon, see Figure \ref{figpolygon} (left).  Given its length $\ell=x\in(0,\infty)$, its direction $\varphi=u\in{\cal S}_+^2$, its time of birth $\beta=s\in(0,t)$ and the birth time $\beta_{\rm carr}=r\in(0,s)$ of the carrying I-polygon, the number $N_{\rm birth}$ of vertices in the relative interior of the typical I-segment at the time of birth $s$ has, by (\ref{BOXPLUS}) and \textbf{[Linear sections]}, a Poisson distribution with parameter $\Lambda([u])x(s-r)$. Moreover, in the time interval $(s,t]$, on both sides of the I-polygon with birth time $s$ and `behind' the carrying I-polygon a Poisson distributed number of vertices appear in the relative interior of the segment, whose parameter is, by (\ref{BOXPLUS}) and again by \textbf{[Linear sections]}, $\Lambda([u])x(t-s)$, see Figure \ref{figpolygon} (right). Thus, the sum of these independent numbers is again Poisson distributed, but with parameter $3\Lambda([u])x(t-s)$. Adding the independent number $N_{\rm birth}$ leads to a Poisson distribution with parameter $\Lambda([u])x(3t-2s-r)$.

Recall now that we have calculated in Corollary \ref{cor2} the conditional joint density of length and birth time vector and, thus, mixing the Poisson distributed number of points with respect to this density and the directional distribution ${\cal R}_{\rm typ}$, we arrive at (\ref{helpeq}).

We show now that (\ref{helpeq}) is equivalent to (\ref{eqpn}). First, using $$\int\limits_0^\infty (\Lambda([u])x)^ne^{-\Lambda([u])sx}e^{-\Lambda([u])x(3t-2s-r)}dx={n!\over\Lambda([u])(3t-s-r)^{n+1}},$$ we see that (\ref{helpeq}) can be transformed into 
\begin{eqnarray}
\nonumber {\mathsf p}_n &=& \int\limits_0^t\int\limits_0^s\int\limits_{{\cal S}_+^2}{3s^2\over t^3}{(3t-2s-r)^n\over(3t-s-r)^{n+1}}{\cal R}_{\rm typ}(du)drds\\
\nonumber &=& \int\limits_0^t\int\limits_0^s{3s^2\over t^3}{(3t-2s-r)^n\over(3t-s-r)^{n+1}}drds .
\end{eqnarray}
 Applying now in the inner integral the substitution $b=1-r/s$, we obtain $${\mathsf p}_n=\int\limits_0^t\int\limits_0^1{3s^3\over t^3}{(3t-s(3-b))^n\over(3t-s(2-b))^{n+1}}dbds$$ and applying the similar substitution $a=1-s/t$, finally leads to (\ref{eqpn}).\hfill $\Box$
\end{proof}

\begin{proof}[Proof of Corollary \ref{cor4}]
The numbers shown in Corollary \ref{cor4} can immediately be calculated from the explicit formula in Theorem \ref{thm3}. To see it, write
\begin{eqnarray*}
\sum_{n=0}^\infty n\;{\mathsf p}_n &=& 3\int\limits_0^1\int\limits_0^1(1-a)^3\sum_{n=0}^\infty n{(3-(1-a)(3-b))^n\over(3-(1-a)(2-b))^{n+1}}db\, da\\
&=& 3\int\limits_0^1\int\limits_0^1 (1-a)(a(3-b)+b)db\, da=2
\end{eqnarray*}
and similarly
\begin{eqnarray*}
\sum_{n=0}^\infty n^2\;{\mathsf p}_n &=& 3\int\limits_0^1\int\limits_0^1(1-a)^3\sum_{n=0}^\infty n^2{(3-(1-a)(3-b))^n\over(3-(1-a)(2-b))^{n+1}}db\, da\\
&=& 3\int\limits_0^1\int\limits_0^1 a^2(2b^2-11b+15)-a(4b^2-10b-3)+b(1+2b)da\, db\\
&=& {71\over 3},
\end{eqnarray*}
which yields the variance value $71/3-2^2=59/3$.\hfill $\Box$
\end{proof}

\begin{proof}[Proof of Theorem \ref{thm2}]
Let the birth time $\beta=s<t$ of the typical I-segment and the birth time $\beta_{\rm carr}=r<s$ of the carrying I-polygon be given. At time $s$, the typical I-segment can only have X-type vertices in its relative interior (this is the number $N_{\rm birth}$ in the proof of Theorem \ref{thm3} above) and further vertices of this type can be created from `behind' the segment until time $t$. At time $s$ the I-segment is an edge (1-facet) of two cells. Vertices of type T in the interior of the I-segment can  appear in the time interval $(s,t)$ by further division of these two adjacent cells. After these observations the proof readily follows the lines of the proof of Theorem \ref{thm3} and for this reason the details are omitted.\hfill $\Box$
\end{proof}

\begin{proof}[Proof of Corollary \ref{cor5}]
 Corollary \ref{cor5} follows from Theorem \ref{thm2} by direct calculation. For example, the mean number of T-type vertices in the relative interior of the typical I-segment can be calculated as follows:
\begin{eqnarray*}
 \sum_{m=0}^\infty m\sum_{n=0}^\infty{\mathsf p}_{m,n} &=& 3\sum_{m=0}^\infty m \int\limits_0^1\int\limits_0^1 {(1-a)^3\over 1+a}\left({2a\over 1+a}\right)^m db\, da\\
 &=& 6\int\limits_0^1 a(1-a)da\int\limits_0^1db = 1. 
\end{eqnarray*}
For the second moment we find
\begin{eqnarray*}
 \sum_{m=0}^\infty m^2\sum_{n=0}^\infty{\mathsf p}_{m,n} &=& 3\sum_{m=0}^\infty m^3 \int\limits_0^1\int\limits_0^1 {(1-a)^3\over 1+a}\left({2a\over 1+a}\right)^m db\, da\\
 &=& 6\int\limits_0^1 a(1+3a)da\int\limits_0^1db = 9
\end{eqnarray*}
so that the variance equals $9-1^2=8$. Similar calculations yield mean $1$ and variance $11/3$ for the number of X-vertices in the relative interior of the typical I-segment. The value for the covariance is obtained from Corollary \ref{cor4} by calculating $59/3-11/3-8=8$, which completes the argument.\hfill $\Box$
\end{proof}

\section*{Acknowledgement}
The authors would like to thank Claudia Redenbach (Kaiserslautern) for providing the simulations in Figure \ref{figstit}. We also thank an anonymous referee for his or her valuable comments. VW and WN acknowledge the support from the DFG, grants WE 1899/3-1 and NA 247/6-1.


\begin{thebibliography}{99}
\footnotesize

\bibitem{cc} {\sc Chen, F.K.C. and Cowan, R.} (1999). Invariant distributions for shapes in sequences of randomly-divided rectangles. {\em Adv. Appl. Probab.} \textbf{31}, 1--14.

\bibitem{cowan1} {\sc Cowan, R.} (1997). Shapes of rectangular prisms after repeated random division. {\em Adv. Appl. Probab.} \textbf{29}, 26--37.

\bibitem{cowan3} {\sc Cowan, R.} (2010). New classes of random tessellations arising from iterative division of cells. {\em Adv. Appl. Probab.} \textbf{42}, 26--47.

\bibitem{H} {\sc Heinrich, L.} (2009). Central limit theorems for motion-invariant Poisson hyperplanes in expanding convex windows. {\em Rendiconti del Circolo Matematico di Palermo Series II}, Suppl. {\bf 81}, 187--212.

\bibitem{HM} {\sc Heinrich, L. and Muche, L.} (2008). Second-order properties of the point process of nodes in a stationary Voronoi tessellation. {\em Math. Nachr.} \textbf{281}, 350--375.

\bibitem{HS3} {\sc Hug, D. and Schneider, R.} (2011). Faces with given directions in anisotropic Poisson hyperplane mosaics. {\em Adv. Appl. Probab.} \textbf{43}, 308--321.

\bibitem{lr} {\sc Lachi\`eze-Rey, R.} (2011). Mixing properties for STIT tessellations. {\em Adv. Appl. Probab.} \textbf{43}, 40--48.

\bibitem{erev} {\sc Mecke, J., Nagel, W. and Weiss, V.} (2007). Length distributions of edges of planar stationary and isotropic STIT tessellations. {\em J. Contemp. Math. Anal.} {\bf 42}, 28--43.

\bibitem{mnwconstr} {\sc Mecke, J., Nagel, W. and Weiss, V.} (2008). A global construction of homogeneous random planar tessellations that are stable under iteration. {\em Stochastics} \textbf{80}, 51--67.

\bibitem{mnwmathnachr} {\sc Mecke, J., Nagel, W. and Weiss, V.} (2011). Some distributions for I-segments of planar random homogeneous STIT tessellations.  {\em Math. Nachr.} {\bf 284}, 1483--1495.

\bibitem{mm} {\sc Miles, R. and Mackisack, M.} (2002). A large class of random tessellations with the classic Poisson polygon distributions. \textit{Forma} \textbf{17}, 1--17.

\bibitem{nwstit} {\sc Nagel, W. and Weiss, V.} (2005). Crack STIT tessellations: Characterization of stationary random tessellations stable with respect to iteration. {\em Adv. Appl. Probab.} {\bf 37}, 859--883.

\bibitem{nw3d} {\sc Nagel, W. and Weiss, V.} (2008). Mean values for homogeneous STIT tessellations in 3D. {\em Image. Anal. Stereol.} {\bf 27}, 29--37.

\bibitem{rt} {\sc Redenbach, C. and Th\"ale, C.} (2011). Second-order comparison of three fundamental tessellation models. {\em Statistics}, DOI: 10.1080/02331888.2011.586458.

\bibitem{s/w} {\sc Schneider, R. and  Weil, W.} (2008). {\em Stochastic and Integral Geometry.} Springer. 

\bibitem{ST2}
{\sc Schreiber, T. and Th\"ale, C.} (2010). {Second-order properties and central limit theory for the vertex process of iteration infinitely divisible and iteration stable random tessellations in the plane}. {\em Adv. Appl. Probab.} \textbf{42}, 913--935.

\bibitem{ST3}
{\sc Schreiber, T. and Th\"ale, C.} (2011). {Intrinsic volumes of the maximal polytope process in higher dimensional STIT tessellations}. {\em Stoch. Proc. Appl.} \textbf{121}, 989--1012.

\bibitem{ST4}
{\sc Schreiber, T. and Th\"ale, C.} (2011). {Geometry of iteration stable tessellations: Connection with Poisson hyperplanes}. {\em arXiv 1103.3958 [math.PR]}.

\bibitem{stkm} {\sc Stoyan, D., Kendall, W.S. and Mecke, J.} (1995). {\em Stochastic Geometry and its Applications}. 2nd edn. Wiley.

\bibitem{thaele}
{\sc Th\"ale, C.} (2010). {The Distribution of the number of nodes in the relative interior of the typical I-segment in homogeneous planar anisotropic STIT tessellations}. {\em Comment. Math. Univ. Carolin.} \textbf{51}, 503--512.

\bibitem{tw}
{\sc Th\"ale, C. and Wei\ss, V.} (2010). {New mean values for homogeneous spatial tessellations that are stable under iteration}. {\em Image Anal. Stereol.} \textbf{29}, 143--157.

\bibitem{tw2}
{\sc Th\"ale, C. and Wei\ss, V.} (2011). {The combinatorial structure of spatial STIT tessellations}. {\em arXiv 1111.0488 [math.PR]}.

\bibitem{wc}
{\sc Weiss, V. and Cowan, R.} (2011). {Topological relationships in spatial tessellations}. To appear in {\em Adv. Appl. Probab.}

\end{thebibliography}
\end{document}